\newtheorem{theorem}{Theorem}
\newtheorem{prop}[theorem]{Proposition}
\newtheorem{lem}[theorem]{Lemma}
\newtheorem{definition}[theorem]{Definition}
\numberwithin{theorem}{section}
\newtheorem{rem}[theorem]{Remark}
\newcommand{\Om} {\Omega}
\newcommand{\pa} {\partial}
\newcommand{\be} {\begin{equation}}
\newcommand{\ee} {\end{equation}}
\newcommand{\bea} {\begin{eqnarray}}
\newcommand{\eea} {\end{eqnarray}}
\newcommand{\Bea} {\begin{eqnarray*}}
\newcommand{\Eea} {\end{eqnarray*}}
\newcommand{\al} {\alpha}
\newcommand{\de} {\delta}
\newcommand{\De} {\Delta}
\newcommand{\la} {\lambda}
\def\R{{\mathbb R}}
\def\R{{\mathbb R}}
\newcommand{\na} {\nabla}
\newcommand{\rar}{\rightarrow}
\numberwithin{equation}{section}
\begin{document}
\title[qualitative remarks] {Some remarks on the qualitative questions for biharmonic equations}
\author[G. Dwivedi, J.\,Tyagi ]
{G.\,Dwivedi, J.Tyagi}
\address{G.\,Dwivedi \hfill\break
 Indian Institute of Technology Gandhinagar \newline
 Vishwakarma Government Engineering College Complex \newline
 Chandkheda, Visat-Gandhinagar Highway, Ahmedabad \newline
 Gujarat, India - 382424}
 \email{dwivedi\_gaurav@iitgn.ac.in}

\address{J.\,Tyagi \hfill\break
 Indian Institute of Technology Gandhinagar \newline
 Vishwakarma Government Engineering College Complex \newline
 Chandkheda, Visat-Gandhinagar Highway, Ahmedabad \newline
 Gujarat, India - 382424}
 \email{jtyagi@iitgn.ac.in, jtyagi1@gmail.com}
\date{20--12--2013}
\thanks{Submitted 20--12--2013.  Published-----.}
\subjclass[2010]{Primary 35J91;  Secondary 35B35.}
\keywords{bi-Laplacian, variational methods, stability}
\begin{abstract}
In this article, we obtain several interesting remarks on the qualitative questions such as stability criteria,  Morse index, Picone's identity for biharmonic equations.
\end{abstract}
\maketitle
\section{introduction}
In the recent years there has been a good amount of interest on the existence and multiplicity of solutions to biharmonic equations.
Recently, A.E.Lindsay and J.Lega \cite{lin}
obtain multiple quenching solutions of a fourth order parabolic partial differential equation
\begin{equation}\label{pm}
\left\{
  \begin{array}{ll}
   u_{t}= - \Delta^2u+ \delta \Delta u -\la \frac{h(x)}{(1+u)^{2} } \,\,\,\,\mbox{in}\,\,\,\, \Omega\subset \R^{2},\\
   u=0=\frac{\partial u}{\partial \nu}\,\,\,\,\,\,\,\mbox{on} \,\,\,\partial\Omega,\\
  u=0,\,\,\,\,t=0,\, \,\,\,\,\,\,\,x \in \Omega.
 \end{array}
\right.
\end{equation}
Eq.\,\eqref{pm} models a microelectromechanical systems (MEMS) capacitor, where $u(x,\,t)$
represents the deflection of the device and $\delta$ represents the relative effects of tension and rigidity on the deflecting plate, $\la\geq 0$
represents the ratio of electric forces to elastic forces and $h$ represents possible heterogeneities in the deflecting surface's dielectric profile.
For the details on this subject, we refer the reader to \cite{pel}.
The steady state of Eq.\,\eqref{pm} (when $u(x,\,t)$ is independent of $t$) is:
\begin{equation}\label{pm1}
\left\{
 \begin{array}{ll}
   \Delta^{2} u - \delta \Delta u = - \la \frac{h(x)}{(1+u)^{2} } \,\,\,\,\mbox{in}\,\,\,\, \Omega, \\
   u=0=\frac{\partial u}{\partial \nu} \,\,\,\,\,\,\,\,\,\mbox{on} \,\,\,\partial\Omega.\\
 \end{array}
\right.
\end{equation}
For the existence of positive solutions to problems similar to \eqref{pm1} in $\R^{N},$ we refer to \cite{sat} and the references therein
and for the existence and bifurcation results to more general problem
\begin{equation}\label{pm4}
\left\{
 \begin{array}{ll}
   \Delta^{2} u - \Delta_{p} u = f(\la,\,x,\,u) \,\,\,\,\mbox{in}\,\,\,\, \Omega, \\
   u=0=\frac{\partial u}{\partial \nu} \,\,\,\,\,\,\,\,\,\mbox{on} \,\,\,\partial\Omega,\\
 \end{array}
\right.
\end{equation}
we refer to \cite{kan}. Equations of type \eqref{pm1} are also discussed on Riemannian manifold $(M^{n},g),\,\,n\geq 5,$  see \cite{you},
where the author obtain the existence of classical solutions to
\begin{equation}\label{pmo}
\Delta_{g}^{2} u - div(a(x) \nabla_{g} u)+ b(x) u = f(x)|u|^{N-2} u \,\,\,\,\mbox{in}\,\,\,\,M^{n}.\\
\end{equation}

There is  also a good amount of work on the qualitative questions such as
stability criteria, Picone's identity, Morse index, Sturm comparison theorem for Laplace as well as $p$-Laplace equations
but very little is known for biharmonic equations.
Recently, there have some investigations on the stability of solutions to $p$-Laplace equations/quasilinear elliptic equations,
see for instance, \cite{cas,karat,tyagi1,tyagi2} and the references therein.
We refer to \cite{ber} for the stability results to biharmonic equations and \cite{war} for Liouville theorems for stable radial solutions for the biharmonic operators.
Very recently, J.Wei and D.Ye  \cite{wei}
prove Liouville type results for stable solutions to the biharmonic problem
$$\Delta^2 u = u^q,\,\,\,\, u > 0 \,\,\,\mbox{in} \,\,\,\R^n\,\,\, \mbox{where}\,\, 1<q< \infty$$
and classify the unstable solutions for different ranges of $n$ and $q.$ In this context, there is a natural question to ask whether we can obtain the stability of
positive solution to biharmonic equations with sign changing nonlinerity.
In fact, motivated by the work of A.E.Lindsay and J.Lega \cite{lin},
we pose the stability question to the following fourth order boundary value problem:
\begin{equation}\label{pab}
\left\{
\begin{array}{ll}
    \Delta^2 u-\delta\Delta u=a(x)u-f_1(x,\,u)\,\,\,\mbox{in}\,\,\,\, \Omega,\\
    \delta u-2\Delta u\geq 0 \,\,\,\,\mbox{in}\,\,\,\Omega,\\
   u=0=\frac{\partial u}{\partial \nu}\,\,\,\,\,\mbox{on} \,\,\,\partial\Omega,
  \end{array}
\right.
\end{equation}
where $\Omega\subset\mathbb{R}^N$ is an open, smooth and bounded subset, $a\in L^{\infty}(\Omega), \,\,\delta>0$ and $f_1\in C(\overline{\Om}\times \R,\,\R).$

It is a well-known fact that in the qualitative theory of elliptic PDEs, Picone's identity plays an important role.
The classical Picone's identity says that
for differentiable functions $v>0$ and $u\geq 0,$
\be\label{pico}
|\nabla u|^2 + \frac{u^2}{v^2} |\nabla v|^2- 2 \frac{u}{v} \nabla u\nabla v= |\nabla u|^2- \nabla \left( \frac{u^2}{v} \right)\nabla v \geq 0.
\ee
\eqref{pico} has an enormous applications to second-order elliptic equations and systems, see for instance, \cite{al1,al2,al3,manes} and the references therein.
Let us write briefly the recent developments on Picone's identity.
In order to apply \eqref{pico} to p-Laplace equations, \eqref{pico} is extended by W. Allegretto and Y.X.Huang \cite{ale}.
The extension to \eqref{pico} is as follows:
\begin{theorem}\label{Pla}\cite{ale}
 Let $v>0$ and $u\geq0$ be differentiable.
Denote
 \begin{gather*}
 L(u,v)=|\nabla u|^p + (p-1) \frac{u^p}{v^p} |\nabla v|^{p}  - p \frac{u^{p-1}}{v^{p-1}} \nabla u |\nabla v|^{p-2} \nabla v.\\
 R(u,v)=|\nabla u|^p-\nabla(\frac{u^p}{v^{p-1} })|\nabla v|^{p-2}\nabla v.
 \end{gather*}
Then $L(u,v)=R(u,v).$ Moreover, $L(u,v)\geq 0$ and $L(u,v)=0$ a.e. in $\Omega$
if and only if $\nabla (\frac{u}{v})=0$ a.e. in $\Omega.$
\end{theorem}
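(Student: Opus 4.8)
The plan is threefold: first to establish the algebraic identity $L(u,v)=R(u,v)$ by a direct differentiation, then to prove the pointwise bound $L(u,v)\geq 0$ by reducing it to Young's inequality, and finally to read off the equality case. Throughout I use that $v>0$, so that all the quotients are well defined.

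For the identity, I would apply the quotient rule to the fraction appearing in $R$, obtaining
\[
\nabla\left(\frac{u^p}{v^{p-1}}\right)=p\,\frac{u^{p-1}}{v^{p-1}}\nabla u-(p-1)\frac{u^p}{v^p}\nabla v .
\]
Taking the inner product with $|\nabla v|^{p-2}\nabla v$ and using $\nabla v\cdot\nabla v=|\nabla v|^2$ turns the second summand into $-(p-1)\frac{u^p}{v^p}|\nabla v|^p$. Substituting back into the definition of $R(u,v)$ then reproduces, term by term, the three summands of $L(u,v)$, so $L=R$ holds pointwise wherever $v>0$.

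For nonnegativity, the crux is to introduce the vector fields $a=\nabla u$ and $b=\frac{u}{v}\nabla v$. Since $u\geq 0$ and $v>0$, one has $|b|=\frac{u}{v}|\nabla v|$ and $|b|^{p-2}b=\frac{u^{p-1}}{v^{p-1}}|\nabla v|^{p-2}\nabla v$, so that
\[
L(u,v)=|a|^p+(p-1)|b|^p-p\,|b|^{p-2}(a\cdot b).
\]
By the Cauchy--Schwarz inequality $a\cdot b\leq |a|\,|b|$, hence
\[
L(u,v)\geq |a|^p+(p-1)|b|^p-p\,|a|\,|b|^{p-1}.
\]
Writing $s=|a|$ and $t=|b|$, the right-hand side is $s^p+(p-1)t^p-p\,s\,t^{p-1}$, which is precisely Young's inequality $s\,t^{p-1}\leq \frac{s^p}{p}+\frac{(p-1)t^p}{p}$ applied with the conjugate exponents $p$ and $p/(p-1)$; equivalently, for fixed $t$ the map $s\mapsto s^p+(p-1)t^p-p\,s\,t^{p-1}$ is convex and attains its minimum value $0$ at $s=t$. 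This yields $L(u,v)\geq 0$.

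Finally, for the equality case, $L(u,v)=0$ forces equality simultaneously in both Cauchy--Schwarz and Young's inequality. The Young equality gives $|a|=|b|$, i.e.\ $|\nabla u|=\frac{u}{v}|\nabla v|$, while the Cauchy--Schwarz equality forces $a$ and $b$ to be nonnegatively parallel; combining these (with the degenerate case $b=0$, where $L=|a|^p$, handled directly) upgrades to the full vector equality $a=b$, that is $\nabla u=\frac{u}{v}\nabla v$. Since $v>0$, this is equivalent to $v\nabla u-u\nabla v=0$, i.e.\ $\nabla(u/v)=0$, and the converse is immediate by substitution. I expect the main care to lie not in the identity but in this last step: one must keep track of two chained inequalities at once and argue that the scalar equality $|a|=|b|$ together with parallelism promotes to the vector identity $\nabla u=\frac{u}{v}\nabla v$, including at the points where $\nabla v=0$ or $u=0$.
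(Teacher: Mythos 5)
The paper states this theorem without proof, quoting it verbatim as background from Allegretto--Huang \cite{ale}, so there is no in-paper argument to compare against; your proof stands or falls on its own, and it stands. It is essentially the classical argument from that reference: the quotient rule gives $L=R$; the substitution $a=\nabla u$, $b=\frac{u}{v}\nabla v$ (valid since $u\geq 0$, $v>0$ make $|b|^{p-2}b=\frac{u^{p-1}}{v^{p-1}}|\nabla v|^{p-2}\nabla v$ correct) reduces nonnegativity to Cauchy--Schwarz plus Young's inequality with conjugate exponents $p$ and $p/(p-1)$; and the equality case is handled properly, since you extract both $|a|=|b|$ from the Young equality and nonnegative parallelism from Cauchy--Schwarz, and you treat the degenerate set $b=0$ (where $L=|a|^p$ forces $a=0$) separately, which is exactly the point where sloppier write-ups go wrong for $p<2$ because of the factor $|b|^{p-2}$. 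The only hypothesis you use silently is $p>1$, needed both for the conjugate exponents and for the strict convexity that turns Young equality into $|a|=|b|$; this is implicit throughout the paper's $p$-Laplacian setting, but it would be worth stating.
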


Recently, the second author obtain a nonlinear analogue of \eqref{pico}
in \cite{tyagi} and obtained  some qualitative results. The nonlinear analogue of \eqref{pico} reads as follows:
\begin{theorem}\label{picnn}\cite{tyagi}
Let $v$ be a differentiable function in $\Om$ such that $v\neq 0$ in $\Om$ and $u$ be a non-constant differentiable function in $\Om.$
Let $f(y)\neq 0,\,\forall\,0\neq y\in \R$  and suppose that there exists $\alpha>0$  such that
$f'(y)\geq \frac{1}{\alpha},\,\forall\,0\neq y\in \R.$ Denote
\bea\label{gpic1}
L(u,\,v)= \alpha|\nabla u|^{2} - \frac{|\nabla u|^{2}}{f'(v)} + \left(\frac{u \sqrt{f'(v)}\nabla v }{f(v)} -
\frac{\nabla u}{\sqrt{f'(v)}}\right)^{2}.\\
R(u,\,v)= \alpha|\nabla u|^{2}- \nabla\left(\frac{u^2}{f(v)}\right)\nabla v.\label{gpic2}
\eea
Then $L(u,\,v)= R(u,\,v).$ Moreover, $L(u,\,v)\geq 0$ and $L(u,\,v)=0$ in $\Om$ if and only if  $u = c_1 v + c_2$ for some
arbitrary constants  $c_1,\,c_2.$
\end{theorem}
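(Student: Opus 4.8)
The plan is to handle the three claims separately, since the identity $L=R$ is a direct computation, the inequality $L\ge 0$ is a sign check, and the equality characterization carries the real content. For the identity I would expand both sides. On the right, the quotient rule gives $\nabla(u^2/f(v))=\frac{2u}{f(v)}\nabla u-\frac{u^2 f'(v)}{f(v)^2}\nabla v$, so that $R(u,v)=\alpha|\nabla u|^2-\frac{2u}{f(v)}\nabla u\cdot\nabla v+\frac{u^2 f'(v)}{f(v)^2}|\nabla v|^2$. On the left, I would expand the squared vector: its cross term is exactly $-\frac{2u}{f(v)}\nabla u\cdot\nabla v$, its $\nabla v$ term is $\frac{u^2 f'(v)}{f(v)^2}|\nabla v|^2$, and its $\nabla u$ term is $+\frac{|\nabla u|^2}{f'(v)}$, which cancels the $-\frac{|\nabla u|^2}{f'(v)}$ already present in $L$. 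The surviving terms then coincide with $R$ term by term. This step uses only that $f(v)\ne 0$ in $\Omega$ (guaranteed by $v\ne 0$ together with $f(y)\ne 0$ for $y\ne 0$) and that $f'(v)>0$, so that the square roots are real.

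For $L\ge 0$ I would regroup as $L(u,v)=|\nabla u|^2\big(\alpha-\tfrac{1}{f'(v)}\big)+\big(\cdots\big)^2$. The squared term is nonnegative by inspection, and since the hypothesis $f'(y)\ge 1/\alpha$ forces $f'(v)>0$ and hence $1/f'(v)\le\alpha$, the first term is nonnegative as well. Thus $L\ge 0$ pointwise in $\Omega$.

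The equality analysis is where I expect the difficulty to concentrate. As a sum of two nonnegative pieces, $L\equiv 0$ forces each to vanish. Vanishing of the squared vector reads $\nabla u=\frac{u f'(v)}{f(v)}\nabla v$, which I would repackage as $\nabla(u/f(v))=0$; connectedness of $\Omega$ then gives $u=k\,f(v)$ for a constant $k$. Vanishing of the first piece forces $f'(v)=1/\alpha$ wherever $\nabla u\ne 0$, and such points exist because $u$ is non-constant; there $f'$ attains its floor $1/\alpha$ along the range of $v$, so $f$ is affine there, say $f(s)=s/\alpha+b$, and substituting into $u=k\,f(v)$ yields $u=(k/\alpha)v+kb=c_1 v+c_2$. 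The steps I would scrutinize most carefully are the passage from the pointwise relation $\nabla(u/f(v))=0$ to a genuinely global constant $k$ (controlling the behavior on the set where $\nabla u$ vanishes), and the converse implication, where one must verify that the affine relation $u=c_1 v+c_2$ is actually consistent with both vanishing conditions at once. This compatibility, rather than the forward direction, is the true crux of the argument.
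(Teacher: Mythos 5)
Your computation of the identity $L=R$ and of the pointwise inequality $L\geq 0$ is correct, and the forward half of the equality analysis is essentially sound, though one step is imprecise: from $f'(v(x))=1/\alpha$ on the set $\{\nabla u\neq 0\}$ you cannot conclude that $f$ is affine, since the image $v(\{\nabla u\neq 0\})$ need not be an interval and $f'$ is unconstrained at values of $v$ taken where $\nabla u$ vanishes. The clean fix is pointwise rather than through the global shape of $f$: from $u=k\,f(v)$ (with $k\neq 0$, else $u$ is constant) one gets $\nabla u=k f'(v)\nabla v$, so $\nabla u=0$ exactly where $\nabla v=0$ because $f'(v)\geq 1/\alpha>0$; on $\{\nabla u\neq 0\}$ the first vanishing condition gives $f'(v)=1/\alpha$, hence $\nabla u=(k/\alpha)\nabla v$ there, and this relation holds trivially where both gradients vanish, so $\nabla\bigl(u-(k/\alpha)v\bigr)\equiv 0$ and $u=c_1v+c_2$ on the (connected) domain. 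Note for calibration that the paper contains no proof of Theorem \ref{picnn} at all --- it is quoted from \cite{tyagi} --- but its proofs of the biharmonic analogues, Lemma \ref{pic} and Proposition \ref{npic}, proceed by exactly your expansion.

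The genuine gap is the converse, which you correctly flag as the crux but never carry out --- and in fact, as literally stated, it fails. Take $f(y)=y$ and $\alpha=1$ (so $f(y)\neq 0$ for $y\neq 0$ and $f'\equiv 1=1/\alpha$), and let $u=v+1$ with $v>0$ non-constant. The first two terms of $L$ cancel and
\begin{equation*}
L(u,\,v)=\Bigl(\frac{u\,\nabla v}{v}-\nabla u\Bigr)^{2}=\frac{|\nabla v|^{2}}{v^{2}}\not\equiv 0,
\end{equation*}
although $u=c_1v+c_2$ with $c_1=c_2=1$. The implication ``$u=c_1v+c_2\Rightarrow L=0$'' needs the compatibility conditions $f'(v)=1/\alpha$ and $c_1 f(v)=(c_1v+c_2)f'(v)$ along the range of $v$, which do not follow from the hypotheses for arbitrary $c_1,\,c_2$ --- precisely the ``consistency with both vanishing conditions'' you suspected. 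It is instructive that the paper's own converse in Proposition \ref{npic} quietly sidesteps this: there the authors assume the intermediate vanishing conditions \eqref{conv} (the two pieces of $L$ vanishing) and deduce $L=0$, rather than starting from the affine relation $u=cv+d$, so the stated ``if and only if'' is stronger than what is actually proved, in Theorem \ref{picnn} just as in Proposition \ref{npic}. To make the statement literally true one must either restrict to $f$ satisfying $f(s)=s/\alpha+b$ on the range of $v$, or weaken the equivalence so that $L=0$ is characterized by the two vanishing conditions rather than by $u=c_1v+c_2$ alone.
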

K. Bal \cite{bal} extended the nonlinear Picone's identity of \cite{tyagi} to
deal with p-Laplace equations. The extension reads as follows:

\begin{theorem}\label{PICN}\cite{bal}
 Let $v>0$ and $u\geq0$ be two non-constant differentiable functions
 in $\Omega$. Also assume that $f'(y)\geq (p-1)[f(y)^{\frac{p-2}{p-1}}]$
for all $y$. Define
 \begin{gather*}
 L(u,v)=|\nabla u|^p-\frac{p u^{p-1}\nabla u|\nabla v|^{p-2}\nabla v}{f(v)}
 +\frac{u^pf'(v)|\nabla v|^p}{[f(v)]^2}.\\
 R(u,v)=|\nabla u|^p-\nabla(\frac{u^p}{f(v)})|\nabla v|^{p-2}\nabla v.
 \end{gather*}
Then $L(u,v)=R(u,v)\geq0$. Moreover $L(u,v)=0$ a.e. in $\Omega$
if and only if $\nabla (\frac{u}{v})=0$ a.e. in $\Omega.$
\end{theorem}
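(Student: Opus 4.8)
The plan is to establish the three assertions separately: the algebraic identity $L=R$, the sign $L\geq 0$, and the equality characterization.

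First I would verify $L(u,v)=R(u,v)$ by a direct computation, which uses no hypothesis on $f$. Applying the quotient rule gives $\nabla\bigl(u^p/f(v)\bigr)=\frac{p u^{p-1}\nabla u}{f(v)}-\frac{u^p f'(v)\nabla v}{[f(v)]^2}$, and contracting this against $|\nabla v|^{p-2}\nabla v$, together with $\nabla v\cdot\nabla v=|\nabla v|^2$, reproduces exactly the two correction terms in $L$. This is routine and only rearranges terms.

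The substantive step is $L(u,v)\geq 0$, and the idea is to split $L$ into two manifestly nonnegative pieces. First I would invoke the structural hypothesis $f'(v)\geq(p-1)f(v)^{\frac{p-2}{p-1}}$ to bound the last term from below, replacing $\frac{u^p f'(v)|\nabla v|^p}{[f(v)]^2}$ by $(p-1)\frac{u^p|\nabla v|^p}{f(v)^{p/(p-1)}}$, the exponent arithmetic being $\frac{p-2}{p-1}-2=-\frac{p}{p-1}$. It then suffices to prove
\be
|\nabla u|^p+(p-1)\frac{u^p|\nabla v|^p}{f(v)^{p/(p-1)}}\geq\frac{p\,u^{p-1}\nabla u\,|\nabla v|^{p-2}\nabla v}{f(v)}.
\ee
This is precisely the vector Young inequality $|a|^p+(p-1)|b|^p\geq p\,|b|^{p-2}\,b\cdot a$, a consequence of the convexity of $x\mapsto|x|^p$ (equivalently scalar Young with exponents $p$ and $p/(p-1)$ combined with $b\cdot a\leq|b||a|$), applied with $a=\nabla u$ and $b=u\nabla v/f(v)^{1/(p-1)}$. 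A short check confirms that $|b|^p$ and $|b|^{p-2}b$ regenerate the two remaining terms, so $L\geq 0$ follows.

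Finally, for the equality case I would track when both inequalities above are sharp. Since $L$ is a sum of two nonnegative quantities, $L(u,v)=0$ a.e. forces each to vanish: equality in the vector Young inequality forces $\nabla u=b=u\nabla v/f(v)^{1/(p-1)}$, while equality in the $f'$-bound forces $f'(v)=(p-1)f(v)^{\frac{p-2}{p-1}}$ wherever $u|\nabla v|\neq 0$. In the threshold case $f(v)=v^{p-1}$ (for which the hypothesis holds with equality and for which $u^p/f(v)$ matches the model $p$-Laplacian), the Young-equality condition becomes $\nabla u=u\nabla v/v$, i.e. $v\nabla u-u\nabla v=0$, which is exactly $\nabla(u/v)=0$; conversely, if $\nabla(u/v)=0$ a.e. then substituting $u=cv$ back into $L$ collapses it to zero. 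I expect the main obstacle to lie here: for a \emph{general} $f$ satisfying only the inequality, the Young-equality vector $u\nabla v/f(v)^{1/(p-1)}$ need not coincide with $u\nabla v/v$, so one must argue that $L=0$ pins $f$ down (the equality ODE integrates to $f(v)=(v+c)^{p-1}$) and then reconcile this with the stated form $\nabla(u/v)=0$. I would resolve this by isolating the two equality conditions on the set $\{u|\nabla v|\neq 0\}$ and verifying that, for the intended nonlinearity, they are compatible and collapse precisely to $\nabla(u/v)=0$.
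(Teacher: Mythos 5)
First, a point of comparison that matters: the paper contains no proof of Theorem \ref{PICN} at all --- it is quoted verbatim from \cite{bal} as background material --- so your attempt can only be measured against Bal's original argument. For the identity $L=R$ and the inequality $L\geq 0$ your proposal is correct and is, in substance, exactly that argument: the quotient rule gives $L=R$ with no hypothesis on $f$, and nonnegativity follows from $\frac{p\,u^{p-1}\nabla u\cdot|\nabla v|^{p-2}\nabla v}{f(v)}\leq \frac{p\,u^{p-1}|\nabla u|\,|\nabla v|^{p-1}}{f(v)}\leq |\nabla u|^p+(p-1)\frac{u^p|\nabla v|^p}{f(v)^{p/(p-1)}}$ (Cauchy--Schwarz plus scalar Young with exponents $p$ and $p/(p-1)$), after which the structural bound on $f'$ absorbs the last term; your exponent arithmetic $\frac{p-2}{p-1}-2=-\frac{p}{p-1}$ and your identification of the equality conditions are all sound.

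The obstacle you flagged in the equality clause, however, is not a gap you can close by ``verifying compatibility for the intended nonlinearity'': the biconditional as quoted is simply false for general admissible $f$, in both directions. Take $p=2$ and $f(y)=y+1$, so $f'\equiv 1$ and the hypothesis holds with equality for $y>0$; then $L(u,v)=\bigl|\nabla u-\frac{u\nabla v}{v+1}\bigr|^2$, and $u=v+1$ gives $L\equiv 0$ while $\nabla(u/v)=-\frac{\nabla v}{v^2}\not\equiv 0$. Conversely, take $p=2$ and $f(y)=2y$, so $f'\equiv 2\geq 1$; then $u=v$ satisfies $\nabla(u/v)\equiv 0$ but $L(v,v)=\frac{1}{2}|\nabla v|^2\not\equiv 0$. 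Your own analysis explains exactly why: Young-equality pins $\nabla u=\frac{u\nabla v}{f(v)^{1/(p-1)}}$, the $f'$-equality ODE integrates (on the attained range) to $f(y)=(y+c)^{p-1}$, and so the honest conclusion is $\nabla\bigl(\frac{u}{v+c}\bigr)=0$, which collapses to $\nabla(u/v)=0$ only when $c=0$, i.e.\ when $f(y)=y^{p-1}$ --- in which case Theorem \ref{PICN} is just Theorem \ref{Pla}. So the correct resolution is to restate the equality case ($u$ affine in $v$, or restrict $f$), not to reconcile it with the literal statement; it is telling that the paper's own nonlinear analogue, Proposition \ref{npic}, states its equality case in precisely this shifted form $u=cv+d$, whereas the linear Lemma \ref{pic} gets the unshifted $u=\alpha v$.
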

There are also several interesting articles dealing with Picone's identity in different contexts. We just name a few articles, for instance,
for a Picone type identity to higher order half linear differentiable operators, we refer to \cite{jar} and the references therein, for
Picone identities to half-linear elliptic operators with $p(x)$-Laplacians, we refer to \cite{yosi} and for Picone-type identity to pseudo p-Laplacian with variable power,
we refer to \cite{bog}.
In \cite{dun}, D.R.Dunninger established a Picone identity for a class of fourth order elliptic differential inequalities. This identity says that if $u,\,v,\,a\De u,\,A\De v$
are twice continuously differentiable functions with $v(x)\neq 0$ and $a$ and $A$ are positive weights, then
\begin{align}\label{wp1}
&  div\left[ u\nabla(a\De u)- a \De u\nabla u - \frac{u^2}{v} \nabla (A \De v)+ A \De v. \nabla \left(\frac{u^2}{v}  \right)\right]  \nonumber\\
&= - \frac{u^2}{v} \De (A \De v)+ u \De (a \De u) + (A-a)(\De u)^2         \nonumber \\
& -  A \left( \Delta u - \frac{u}{v} \Delta v \right)^{2}+  A  \frac{2\Delta v}{v} \left(\nabla u - \frac{u}{v}\nabla v\right)^{2}.
\end{align}

In this context, threre is a natural question. Can we establish a nonlinear analogue of \eqref{wp1}?
More precisely, the aim of this article is twofold. Firstly, we establish a nonlinear analogue of Picone's identity which could deal with biharmonic equations and secondly using
the similar techniques, we consider the stability of a positive weak solution $u\in H_0^{2}(\Omega)\cap L^\infty (\Omega)$ of \eqref{pab} in any arbitrary smooth bounded domain for sign changing nonlinearity.
In this paper, we assume that $\Omega\subset\mathbb{R}^N$ is an open, smooth and bounded subset and $a\in L^{\infty}(\Omega)$ are such that
\eqref{pab} has a positive weak solution $u\in H_0^{2}(\Omega)\cap L^\infty (\Omega).$

We make the following hypothesis on the nonlinearity $f_1$:

(H1) Let $f_1\in C(\overline{\Omega}\times\mathbb{R}, \mathbb{R})$ and $C^{1}$ in the $y$ variable and satisfies
\[\frac{\partial f_{1}(x,y)}{\partial y}\geq\frac{f_1(x,y)}{y},\,\,\,\,\,\forall\,\,\, 0< y\in\mathbb{R},\,\,\,\forall\,\,x\in \overline{\Om}.\]
The plan of this paper is as follows. Section 2 deals with  the nonlinear analogue of Picone's identity which could deal with biharmonic equations.
In Section 3, we give several applications of Picone's identity to biharmonic equations. In Section 4, we establish a stability theorem of a positive weak solution to
\eqref{pab}. 

\section{nonlinear analogue of Picone's identity}
In this section, we establish a nonlinear analogue of Picone's identity. The next lemma can be obtained from \eqref{wp1} with some assumptions.
Since the proof is short and interesting so we write it independently here with useful insights.

\begin{lem} \rm{(Picone's identity)}\label{pic}
 Let $u$ and $v$ be twice continuously differentiable functions in $\Om$ such that $v>0,\,\,-\De v>0$ in $\Om.$ Denote
  \begin{eqnarray*}
  L(u,\,v)&=& \left( \Delta u - \frac{u}{v} \Delta v \right)^{2} - \frac{2\Delta v}{v} \left(\nabla u - \frac{u}{v}\nabla v\right)^{2}.\\
 R(u,\,v) &= &|\De u|^{2}- \De\left(\frac{u^2}{v} \right) \De v.
\end{eqnarray*}
 Then \rm{(i)}\,\,$L(u,\,v)= R(u,\,v)$\,\,\,\rm{(ii)}\,\,$L(u,\,v)\geq 0$ and \rm{(iii)}\,\,$L(u,\,v)=0$ in $\Om$ if and only if $u= \alpha v $ for some $\alpha \in \R.$
\end{lem}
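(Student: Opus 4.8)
The plan is to handle (i), (ii), (iii) in turn, with the algebraic identity (i) doing the real work. For (i) I would simply expand $R(u,v)$ by computing the Laplacian of $u^2/v$. Starting from $\na(u^2/v)=(2u/v)\na u-(u^2/v^2)\na v$ and differentiating once more gives
\[
\De\!\left(\frac{u^2}{v}\right)=\frac{2|\na u|^2}{v}+\frac{2u\De u}{v}-\frac{4u\,\na u\cdot\na v}{v^2}-\frac{u^2\De v}{v^2}+\frac{2u^2|\na v|^2}{v^3}.
\]
Multiplying by $\De v$ and subtracting from $|\De u|^2$ produces five terms beyond $|\De u|^2$. On the other side I would expand the two squares appearing in $L(u,v)$, namely $\bigl(\De u-\tfrac{u}{v}\De v\bigr)^2$ and $-\tfrac{2\De v}{v}\bigl(\na u-\tfrac{u}{v}\na v\bigr)^2$, and check that the resulting terms coincide with those of $R(u,v)$ one by one. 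This matching is the only computationally delicate point, since a stray factor or sign would break the identity; everything else is elementary.

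I would also note the shortcut promised in the text: the identity (i) is the specialization of Dunninger's formula \eqref{wp1} to the trivial weights $a\equiv A\equiv 1$. In that case the term $(A-a)(\De u)^2$ drops out and the right-hand side of \eqref{wp1} reduces to $u\De^2u-\tfrac{u^2}{v}\De^2v-L(u,v)$, exhibiting $L(u,v)$ as a divergence plus the natural biharmonic terms and confirming the algebra above.

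For (ii), the hypotheses $v>0$ and $-\De v>0$ give the pointwise inequality $-\tfrac{2\De v}{v}>0$. Thus $L(u,v)$ is the sum of the perfect square $\bigl(\De u-\tfrac{u}{v}\De v\bigr)^2\ge 0$ and the nonnegative quantity $-\tfrac{2\De v}{v}\bigl(\na u-\tfrac{u}{v}\na v\bigr)^2$, so $L(u,v)\ge 0$.

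Finally, for (iii) the ``if'' direction is immediate: if $u=\al v$ then $u/v\equiv\al$, so both $\na u-\tfrac{u}{v}\na v$ and $\De u-\tfrac{u}{v}\De v$ vanish and hence $L(u,v)=0$. For the converse, $L(u,v)=0$ combined with the strict positivity $-\tfrac{2\De v}{v}>0$ forces both summands to vanish; in particular $\na u-\tfrac{u}{v}\na v=0$, i.e. $\na(u/v)=(v\na u-u\na v)/v^2=0$ on $\Om$. Since $\Om$ is connected this makes $u/v$ constant, say $\al$, and therefore $u=\al v$. The main obstacle is thus purely bookkeeping in step (i); I would keep that expansion organized and flag that it is connectedness of $\Om$ that upgrades $\na(u/v)=0$ to the conclusion $u=\al v$.
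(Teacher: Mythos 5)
Your proof is correct and follows essentially the same route as the paper: the paper likewise proves (i) by directly expanding $\Delta\left(\frac{u^2}{v}\right)$ and regrouping into the two quadratic terms, gets (ii) from $v>0$, $-\Delta v>0$, and gets (iii) by forcing both summands of $L(u,v)$ to vanish so that $\nabla\left(\frac{u}{v}\right)=0$. Your two additions---checking that the identity is Dunninger's \eqref{wp1} with $a\equiv A\equiv 1$, and flagging that connectedness of $\Omega$ is what upgrades $\nabla\left(\frac{u}{v}\right)=0$ to $u=\alpha v$---are consistent with, and slightly more careful than, the paper, which mentions the first only as a passing remark and leaves the second implicit.
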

\begin{proof}
 Let us expand $R(u,\,v):$
 \begin{eqnarray*}
 R(u,\,v) &= &|\De u|^{2}- \De\left(\frac{u^2}{v} \right) \De v\\
 &=& |\De u|^{2}+ \frac{u^2}{v^2} |\De v|^{2} - \frac{2u}{v} \De u \De v - \frac{2}{v} |\nabla u|^{2} \De v
 + \frac{4u}{v^2} \nabla u\nabla v\De v - \frac{2 u^2}{v^3} |\nabla v|^{2} \De v\\
&=& \left( \Delta u - \frac{u}{v} \Delta v \right)^{2} - \frac{2\Delta v}{v} \left(\nabla u - \frac{u}{v}\nabla v\right)^{2}\\
&=& L(u,\,v),
\end{eqnarray*}
which proves the first part. Now using the fact that $v>0,\,\,-\De v> 0$ in $\Om,$ one can see that $L(u,\,v)\geq 0$ and therefore \rm{(ii)} is proved.
Now $L(u,\,v)=0$ in $\Om$  implies that
\begin{equation*}
0  =  \left( \Delta u - \frac{u}{v} \Delta v \right)^{2} - \frac{2\Delta v}{v} \left(\nabla u - \frac{u}{v}\nabla v\right)^{2},\,\,\,\,\mbox{i.e},
\end{equation*}
 $$ 0\leq - \frac{2\Delta v}{v} \left(\nabla u - \frac{u}{v}\nabla v\right)^{2}=- \left( \Delta u - \frac{u}{v} \Delta v \right)^{2}\leq 0,$$
which implies that there exists some $\al \in \R$ such that $u = \al v.$ Conversely, when $u = \al v,$  one can see easily that $L(u,\,v)=0,$ and therefore \rm{(iii)} is proved.
\end{proof}
\begin{rem}
 We note that the above lemma also holds if we replace $v>0$ and $-\De v>0$ in $\Om$ by $v<0$ and $-\De v<0$ in $\Om,$ respectively.
\end{rem}
In the next proposition, we establish a nonlinear analogue of Picone's identity for biharmonic equations.

\begin{prop}\rm{(Nonlinear analogue of Picone's identity)}\label{npic}
Let $u$ and $v$ be twice continuously differentiable functions in $\Om$ such that $v>0,$ $-\De v>0$ in $\Om$. Let $f\colon \R \rar (0,\infty)$ be a $C^2$
function such that $f''(y)\leq 0,\,\,f'(y) \geq 1,\, \forall \,0\neq y\in\R.$ Denote
\begin{eqnarray*}
 L(u,\,v)&=& |\De u|^{2}-\frac{|\De u|^2}{f'(v)}+\left( \frac{\Delta u}{\sqrt{f'(v)}} - \frac{u}{f(v)}\sqrt{f'(v)}\De v \right)^{2}\\
 & \,\,\,\,&  -\frac{2\Delta v}{f(v)} \left(\nabla u - \frac{uf'(v)}{f(v)}\nabla v\right)^{2}+\frac{u^2f''(v)}{f(v)}|\na v|^2\De v.\\
 R(u,\,v) &= &|\De u|^{2}- \De\left(\frac{u^2}{f(v)} \right) \De v.
\end{eqnarray*}
 Then \rm{(i)}\,\,$L(u,\,v)= R(u,\,v)$\,\,\,\rm{(ii)}\,\,$L(u,\,v)\geq 0$ and \rm{(iii)}\,\,$L(u,\,v)=0$ in $\Om$ if and only if $u= c v +d$ for some $c,d \in \R.$
\end{prop}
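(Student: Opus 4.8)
The plan is to establish (i), (ii), (iii) in turn, following the template of Lemma~\ref{pic} (which is exactly the special case $f(v)=v$, where $f'\equiv 1$ and $f''\equiv 0$) and of the first-order nonlinear identity in Theorem~\ref{picnn}. Before anything else I would run this consistency check: substituting $f(v)=v$ into the displayed $L(u,v)$ should collapse the first two terms to $0$ and annihilate the $f''$ term, returning precisely the $L(u,v)$ of Lemma~\ref{pic}. This is a cheap guard against sign and bookkeeping slips.

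For part (i) the entire content is to expand $R(u,v)$ and recognize the expansion of $L(u,v)$. I would first compute
\[
\nabla\!\left(\frac{u^2}{f(v)}\right)=\frac{2u\,\nabla u}{f(v)}-\frac{u^2 f'(v)}{f(v)^2}\,\nabla v,
\]
and then take the divergence to obtain $\Delta(u^2/f(v))$ as a sum of terms in $|\nabla u|^2$, $u\,\Delta u$, $\nabla u\cdot\nabla v$, $|\nabla v|^2$ and $\Delta v$, together with one genuinely new term carrying $f''(v)$; multiplying by $-\Delta v$ yields $R$. In parallel I would expand the two perfect squares in $L$: the square $\big(\tfrac{\Delta u}{\sqrt{f'(v)}}-\tfrac{u\sqrt{f'(v)}}{f(v)}\Delta v\big)^2$ produces the $\tfrac{|\Delta u|^2}{f'(v)}$ that cancels the explicit $-\tfrac{|\Delta u|^2}{f'(v)}$, a mixed term $-\tfrac{2u\,\Delta u\,\Delta v}{f(v)}$, and a $|\Delta v|^2$ term, while $-\tfrac{2\Delta v}{f(v)}\big(\nabla u-\tfrac{uf'(v)}{f(v)}\nabla v\big)^2$ supplies the $|\nabla u|^2\Delta v$, $\nabla u\cdot\nabla v\,\Delta v$ and $|\nabla v|^2\Delta v$ contributions. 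Matching term by term gives $L=R$. The one place demanding care — and the most likely source of error — is the $f''(v)$ term: its exact power of $f(v)$ must be tracked through the product rule and checked against the last summand of $L$, so I would verify this coefficient on its own rather than trust the pattern.

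For part (ii) I would read $L(u,v)$ as a sum of four blocks and check each is nonnegative under the hypotheses: $\big(1-\tfrac{1}{f'(v)}\big)|\Delta u|^2\ge 0$ since $f'(v)\ge 1$; the perfect square is nonnegative; $-\tfrac{2\Delta v}{f(v)}\big(\nabla u-\tfrac{uf'(v)}{f(v)}\nabla v\big)^2\ge 0$ since $-\Delta v>0$ and $f(v)>0$; and the concavity term $\tfrac{u^2 f''(v)}{f(v)}|\nabla v|^2\Delta v\ge 0$ since $f''(v)\le 0$ while $\Delta v<0$. Hence $L\ge 0$, a conclusion insensitive to the precise power of $f(v)$ in the last term.

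For part (iii), $L=0$ forces each of these four nonnegative summands to vanish simultaneously. From the gradient square, using $-\Delta v/f(v)>0$, I obtain $\nabla u=\tfrac{uf'(v)}{f(v)}\nabla v$, equivalently $\nabla\!\big(\tfrac{u}{f(v)}\big)=0$, so on a connected $\Omega$ one has $u=C\,f(v)$ for a constant $C$. If $C=0$ then $u\equiv 0$ and we are done; if $C\ne 0$, vanishing of the $\Delta u$-square term gives $\Delta u=\tfrac{uf'(v)}{f(v)}\Delta v$, whereas $\Delta(Cf(v))=C f'(v)\Delta v+C f''(v)|\nabla v|^2$, and comparing forces $C f''(v)|\nabla v|^2=0$, i.e. $f''(v)=0$ wherever $\nabla v\ne 0$. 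Thus $f$ is affine on the relevant range of $v$, and $u=Cf(v)$ becomes $u=cv+d$; the converse is checked by direct substitution. I expect this last step to be the main obstacle: upgrading the clean relation $u=Cf(v)$ that the gradient term delivers immediately to the stated affine form $u=cv+d$ is exactly where the structural hypotheses $f'\ge 1$ and $f''\le 0$ must be exploited in full, and where one must handle the set $\{\nabla v=0\}$ and the connectedness of $\Omega$ with some care.
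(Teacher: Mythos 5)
Your plans for (i) and (ii) coincide with the paper's proof in all essentials: expand $\Delta\bigl(u^2/f(v)\bigr)$ by the product rule, multiply by $-\Delta v$, and regroup into the two perfect squares plus the $(1-\tfrac{1}{f'(v)})|\Delta u|^2$ and $f''$ blocks, then check each block is nonnegative from $f'\geq 1$, $f''\leq 0$, $f>0$, $-\Delta v>0$. Your instinct to isolate the $f''$ coefficient is vindicated: the expansion produces $\tfrac{u^2f''(v)}{f^2(v)}|\nabla v|^2\Delta v$, whereas the statement of $L$ carries $f(v)$ in the denominator (the paper's own displayed computation ends with $f^2(v)$, so the statement has a typo); since $f>0$ this is harmless for the sign analysis. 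In the forward half of (iii) you take a genuinely different route from the paper: the paper reads $f'(v)=1$ off the vanishing of the first block (silently discarding the alternative $\Delta u=0$), writes $f(v)=v+c_1$, and integrates $\nabla\bigl(\tfrac{u}{v+c_1}\bigr)=0$; you instead integrate the gradient square to $u=Cf(v)$ and then force $Cf''(v)|\nabla v|^2=0$ from the middle square. Your route is sound and arguably more careful; to close the loose end you flagged about $\{\nabla v=0\}$, simply note $\nabla\bigl(f'(v)\bigr)=f''(v)\nabla v\equiv 0$ in $\Omega$ (it vanishes where $\nabla v=0$ trivially and where $\nabla v\neq 0$ by your relation), so on a connected $\Omega$ one has $f'(v)\equiv k$ constant, whence $\nabla u=Ck\nabla v$ and $u=cv+d$.

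The genuine gap is your final claim that the converse of (iii) is ``checked by direct substitution'': it is not, and in fact the reverse implication as stated is false for general admissible $f$. Substituting $u=cv+d$, the four blocks of $L$ are individually nonnegative by (ii), so no cancellation can occur, and the gradient block equals
\begin{equation*}
-\frac{2\Delta v}{f(v)}\left(c-\frac{(cv+d)\,f'(v)}{f(v)}\right)^{2}|\nabla v|^2,
\end{equation*}
which is strictly positive wherever $\nabla v\neq 0$ unless $c\,f(v)=(cv+d)f'(v)$ along $v$; likewise the first block is strictly positive wherever $\Delta u\neq 0$ and $f'(v)>1$. Reading the hypotheses on the range of $v$, as one must (no $f\colon\R\to(0,\infty)$ with $f'\geq 1$ on $\R\setminus\{0\}$ exists, since $f'\geq 1$ forces $f$ to be unbounded below), take for instance $u=v$ and $f(y)=y+\log(1+y)$, which satisfies $f>0$, $f'>1$, $f''<0$ for $y>0$: then $L(v,v)>0$ although $u=1\cdot v+0$. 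The paper is implicitly aware of this defect: its ``converse'' never starts from $u=cv+d$ but from \eqref{conv}, i.e., from the vanishing of the first block and of the gradient term, and deduces $L=0$ from there. So the statement you can actually prove is that $L(u,v)=0$ if and only if \eqref{conv} holds (equivalently, $u=Cf(v)$ with $f'(v)$ constant along $v$), which then \emph{implies} the affine form $u=cv+d$; your forward argument delivers exactly this, but the substitution check you deferred is the one step that fails.
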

\begin{proof}
Let us expand $R(u,v):$
\begin{eqnarray*}
R(u,\,v) &= &|\De u|^{2}- \De\left(\frac{u^2}{f(v)} \right) \De v\\
&=&  |\De u|^2-\frac{|\De u|^2}{f'(v)}+\left(\frac{|\De u|^2}{f'(v)}+\frac{u^2f'(v)}{f^2(v)}|\De v|^2-\frac{2u\De u\De v}{f(v)}\right)\\
& \,\,& -\frac{2\De v}{f(v)}\left(|\na u|^2+\frac{u^2f'^2(v)}{f^2(v)}|\na v|^2-\frac{2uf'(v)}{f(v)}\na u. \na v\right)+\frac{u^2f''(v)}{f^2(v)}|\na v|^2\De v \\
&=& |\De u|^{2}-\frac{|\De u|^2}{f'(v)}+\left( \frac{\Delta u}{\sqrt{f'(v)}} - \frac{u}{f(v)}\sqrt{f'(v)}\De v \right)^{2}\\ & \,\,\,\,&
-\frac{2\Delta v}{f(v)} \left(\nabla u - \frac{uf'(v)}{f(v)}\nabla v\right)^{2}+\frac{u^2f''(v)}{f^2(v)}|\na v|^2\De v\\
&=& L(u,v),
\end{eqnarray*}
which proves the first part. Now using the fact that $-\De v> 0,\,f'(y) \geq 1,\,$ and $ f''(y)\leq 0,\,\forall \,0\neq y\in\R,$
we get  $L(u,v)\geq 0$ and therefore \rm{(ii)}
is proved. Now $L(u,v)=0$ in $\Om$ implies that
\begin{equation}\label{conv}
|\De u|^2-\frac{|\De u|^2}{f'(v)}=0\,\,\text{and}\,\, \na u-\frac{uf'(v)}{f(v)}\na v=0.
\end{equation}
This gives $f'(v)=1$ or $f(v)=v+c_1\,\text{where}\, c_1 $ is a constant, which yields
\begin{equation*}
(\na u)(v+c_2)-u\na(v+c_2)=0 \, \text{or}\, \na\left(\frac{u}{v+c_2}\right)=0 \,\,\,\text{i.e.,}\,\,\, \,u=cv+d
\end{equation*}
for some constants $c$ and $d.$
Conversely, let us assume \eqref{conv} holds. We need to show that $L(u,v)=0.$ From \eqref{conv}, we get that $f'(v)=1$ and therefore $f''(v)=0.$ Now it remains to show that
\begin{equation*}
\left( \frac{\Delta u}{\sqrt{f'(v)}} - \frac{u}{f(v)}\sqrt{f'(v)}\De v \right)=0\,\,\,\text{i.e.,}\,\, f(v)\De u=uf'(v)\De v.
\end{equation*}
From \eqref{conv}, we get
\begin{eqnarray*}
0 &=& f(v) \na u -uf'(v) \na v \\
0 &=& f(v)\De v +f'(v) \na u.\na v-f'(v)\na u.\na v-uf''(v) |\na v|^2-uf'(v)\De v\\
f(v)\De u &=& uf'(v)\De v,
\end{eqnarray*}
which completes the proof.
\end{proof}

\section{applications}
This section deals with the applications of Lemma\,\ref{pic} and Proposition\,\ref{npic}.
In next theorem,  we obtain a Hardy-Rellich type inequality.
\begin{theorem}
Assume that there is a $C^2$ function $v$ satisfying
\begin{equation}\label{hrd}
\De^2 v \geq \la g f(v),\,\, v>0,\, -\De v>0\,\, \text{in}\,\, \Om,
\end{equation}
for some $\la>0$ and a nonnegative continuous function $g$ on $\Om$ and $f$ satisfies the conditions of  Proposition\,\ref{npic}. Then for any $u\in C_c^{\infty} (\Om)$
\begin{equation}\label{app1}
\int_\Om |\De u|^2 dx \geq \la\int_\Om g|u|^2dx.
\end{equation}
\end{theorem}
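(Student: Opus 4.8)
The plan is to use the nonlinear Picone identity of Proposition \ref{npic} as a pointwise inequality and then integrate against the differential inequality \eqref{hrd}. Since $v>0$ and $-\De v>0$ in $\Om$, and since $f$ is assumed to satisfy the hypotheses of Proposition \ref{npic}, that proposition applies to the pair $(u,v)$ for any $u\in C_c^\infty(\Om)$ (no sign condition on $u$ is needed, as $u$ is twice continuously differentiable). From $L(u,v)=R(u,v)$ together with $L(u,v)\geq 0$ we immediately obtain the pointwise bound
\[
|\De u|^2 \;\geq\; \De\!\left(\frac{u^2}{f(v)}\right)\De v \qquad \text{in } \Om .
\]
Integrating this over $\Om$ is the first step.

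Next I would transfer both Laplacians onto $v$ by integrating by parts twice. Setting $w=\dfrac{u^2}{f(v)}$, note that $w$ has compact support in $\Om$, since $u$ does and $f(v)>0$ is smooth and bounded away from zero on $\operatorname{supp}(u)$; in particular $w=\pa_\nu w=0$ near $\pa\Om$. Green's second identity applied to $w$ and $\De v$ then gives
\[
\int_\Om \De\!\left(\frac{u^2}{f(v)}\right)\De v\,dx \;=\; \int_\Om \frac{u^2}{f(v)}\,\De^2 v\,dx ,
\]
all boundary contributions vanishing by the compact support of $u$.

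Finally I would invoke the hypothesis \eqref{hrd}. Because $\dfrac{u^2}{f(v)}\geq 0$, multiplying the inequality $\De^2 v\geq \la g f(v)$ by this nonnegative factor and integrating yields
\[
\int_\Om \frac{u^2}{f(v)}\,\De^2 v\,dx \;\geq\; \la\int_\Om \frac{u^2}{f(v)}\,g\,f(v)\,dx \;=\; \la\int_\Om g\,|u|^2\,dx .
\]
Chaining the three displays produces \eqref{app1}, completing the argument.

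The only genuinely delicate point is the double integration by parts. To make $\De^2 v$ meaningful as a continuous function one needs $v$ to be somewhat more regular than the stated $C^2$ (effectively $v\in C^4$, or else \eqref{hrd} must be read in the distributional sense), and one must check that every boundary term in Green's identity drops out. Both issues are resolved by the compact support of $u$, which forces $w$ and its normal derivative to vanish near $\pa\Om$, so beyond this regularity bookkeeping I expect no real obstruction; the heart of the proof is simply the pointwise inequality $R(u,v)\geq 0$ supplied by Proposition \ref{npic}.
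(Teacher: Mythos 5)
Your proof is correct and follows essentially the same route as the paper's: apply Proposition~\ref{npic} to get $\int_\Om |\De u|^2\,dx \geq \int_\Om \De\bigl(\frac{u^2}{f(v)}\bigr)\De v\,dx$, integrate by parts twice (boundary terms vanishing by compact support) to obtain $\int_\Om \frac{u^2}{f(v)}\,\De^2 v\,dx$, and then invoke \eqref{hrd}. The only differences are cosmetic --- the paper works with a test function $\phi\in C_c^\infty(\Om)$ and then ``lets $\phi\to u$'' (vacuous, since $u$ is already in $C_c^\infty(\Om)$), while you also explicitly flag the regularity point ($\De^2 v$ requires more than the stated $C^2$, or a distributional reading) that the paper passes over silently with ``on integration.''
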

\begin{proof}
Take $\phi \in C_c^{\infty}(\Om)$, by Proposition\,\ref{npic}, we have
\begin{eqnarray*}
0 &\leq & \int_\Om L(\phi,v)dx=\int_\Om R(\phi,v) dx\\
& =& \int_\Om  |\De \phi |^2dx-  \int_\Om \De \left(\frac{\phi^2}{f(v)}\right) \De vdx\\
&=&  \int_\Om  |\De \phi |^2dx-  \int_\Om (\De^2 v). \frac{\phi^2}{f(v)}dx,\,\,\,\,\text{on integration},\\
&\leq&  \int_\Om  |\De \phi |^2dx- \la \int_\Om \phi^2 gdx\,\,\,\,\,\,\text{by \eqref{hrd}}.
\end{eqnarray*}
Letting $\phi \rar u,$ we have
\begin{equation*}
\int_\Om |\De u|^2 dx \geq \la\int_\Om g|u|^2dx.
\end{equation*}
\end{proof}
The next lemma deals with a necessary condition for the nonnegative solutions of biharmonic equations.

\begin{lem}\label{l1}
Let $u\in H^{2}(\Om)\cap H_{0}^{1}(\Om)$ be a nonnegative weak solution (not identically zero) of
 \begin{equation}\label{ev1}
\De^{2} u =  a(x) u\,\,\,\mbox{in}\,\,\Omega,\,\,\,\,u= \De u = 0\,\,\,\mbox{on}\,\,\partial\Om,
\end{equation}
where $0\leq a\in L^{\infty}(\Om),$ then $-\De u>0$ in $\Om.$
\end{lem}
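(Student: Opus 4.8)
The plan is to exploit the Navier-type boundary conditions $u = \De u = 0$, which decouple the fourth-order problem into a system of two second-order problems. Setting $w = -\De u$, the equation $\De^{2} u = a(x)u$ becomes $-\De w = a(x)u$ in $\Om$, while the boundary condition $\De u = 0$ on $\pa\Om$ translates into $w = 0$ on $\pa\Om$. Thus the single biharmonic equation is equivalent to the pair $-\De u = w$ and $-\De w = a(x)u$ in $\Om$, with $u = w = 0$ on $\pa\Om$, and the assertion $-\De u > 0$ is exactly the assertion $w > 0$.

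First I would make this equivalence rigorous at the level of weak solutions. Since $a \in L^{\infty}(\Om)$ and $u \in H^{2}(\Om)\cap H_{0}^{1}(\Om) \hookrightarrow L^{2}(\Om)$, the right-hand side $au$ belongs to $L^{2}(\Om)$, and the weak formulation $\int_{\Om} \De u\,\De\phi\,dx = \int_{\Om} au\,\phi\,dx$ shows that $w = -\De u \in L^{2}(\Om)$ is a weak solution of $-\De w = au$; the vanishing of $\De u$ on $\pa\Om$ (encoded in the choice of test space) places $w$ in $H_{0}^{1}(\Om)$. By standard $L^{2}$-elliptic regularity on the smooth bounded domain $\Om$, in fact $w \in H^{2}(\Om)\cap H_{0}^{1}(\Om)$, so $w$ may be treated as a genuine superharmonic function.

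Next I would invoke the maximum principle. Because $u \geq 0$ and $a \geq 0$, we have $au \geq 0$, hence $-\De w \geq 0$ in $\Om$; that is, $w$ is superharmonic with $w = 0$ on $\pa\Om$, and the weak minimum principle yields $w \geq 0$ in $\Om$. To upgrade this to strict positivity I would first rule out $w \equiv 0$: if $w$ vanished identically, then $-\De u = 0$ with $u = 0$ on $\pa\Om$ would force $u \equiv 0$, contradicting the hypothesis that $u$ is not identically zero. With $w \geq 0$, $w \not\equiv 0$ and $-\De w \geq 0$, the strong maximum principle (the strong minimum principle for superharmonic functions) then gives $w > 0$ in $\Om$, i.e. $-\De u > 0$ in $\Om$, as desired.

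The main obstacle I anticipate is the low regularity: $u$ is only a weak solution in $H^{2}(\Om)\cap H_{0}^{1}(\Om)$, so the decoupling into the two second-order problems and the correct reading of the boundary condition $w = 0$ on $\pa\Om$ must be justified through the weak formulation together with an elliptic-regularity bootstrap, rather than by direct pointwise differentiation. Once $w \in H^{2}(\Om)\cap H_{0}^{1}(\Om)$ is secured and its superharmonicity established, the remaining maximum-principle steps are routine.
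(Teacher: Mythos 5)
Your proof is correct and takes essentially the same approach as the paper: setting $w=-\De u$ to decouple \eqref{ev1} into the second-order system $-\De u = w$, $-\De w = a(x)u$ with $u=w=0$ on $\pa\Om$, applying the maximum principle to get $w\geq 0$, ruling out $w\equiv 0$ (since that would force $u\equiv 0$), and concluding $w>0$ by the strong maximum principle. Your additional care in justifying the decoupling and the boundary condition $w=0$ at the level of weak solutions via elliptic regularity is a point the paper's proof passes over silently.
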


\begin{proof}
 Let $-\De u =v.$ Then writing \eqref{ev1} into system form, we get
\begin{equation}\label{sy}
\left\{
\begin{array}{ll}
    -\Delta u = v\,\,\,\mbox{in}\,\,\,\, \Omega,\\
    -\Delta v= a(x) u\,\,\,\mbox{in}\,\,\,\,\Omega,\\
   u=0=v\,\,\,\,\,\mbox{on} \,\,\,\partial\Omega,
  \end{array}
\right.
\end{equation}
Since $a(x)\geq 0$ in $\Om,$ so by maximum principle, we get $v\geq 0.$ By strong maximum principle, either $v>0$ or $v\equiv 0$ in $\Om.$
If $v\equiv 0,$ then we have
$$ -\Delta u = 0\,\,\,\mbox{in}\,\,\,\, \Omega;\,\,\,v= 0\,\,\,\mbox{on} \,\,\partial \Om.$$
Again by maximum principle, we get $u\equiv 0,$ which is a contradiction and therefore $v>0$ in $\Om$ and hence
$$-\Delta u > 0\,\,\,\mbox{in}\,\, \Omega.  $$
 \end{proof}

Next, we consider the following singular system of fourth order elliptic equations:
\begin{equation}\label{singsys}
\begin{aligned}
  \De ^2u &= f(v)\,\,\text{in}\, \Om ,\\
  \De ^2v &= \frac{\left(f(v)\right)^2}{u}\,\,\text{in}\, \Om,\\
  u>0, &\, v>0 \,\, \,\,\,\text{in}\, \Om,\\
  u= \De u=&0 = v =\De v\,\,\, \text{on}\,\, \pa \Om,
\end{aligned}
\end{equation}
where $f$ is as defined in Proposition\,\ref{npic}.
In the next theorem, we show a linear relationship between the components $u$ and $v,$ where $(u,\,v)$ is a solution of
\eqref{singsys}.
\begin{theorem}
Let $(u,v)$ be a weak solution of \eqref{singsys} and $f$ satisfy the conditions of Proposition\,\ref{npic}. Then $u=c_1 v+c_2$, where $c_1,c_2$ are constants.
\end{theorem}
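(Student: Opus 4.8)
The plan is to use the nonlinear Picone identity of Proposition~\ref{npic} together with the system~\eqref{singsys} to show that the integral of $L(u,v)$ vanishes, and then invoke the rigidity part~(iii) of that proposition to conclude $u=c_1v+c_2$. First I would verify that the hypotheses of Proposition~\ref{npic} are met: the function $f$ is assumed to satisfy those conditions by hypothesis, and $v>0,\ -\Delta v>0$ in $\Om$ is exactly the kind of positivity guaranteed for the second component of such a biharmonic system --- here one argues as in Lemma~\ref{l1}, introducing $w=-\Delta v$ and using the maximum principle on $-\Delta v=w,\ -\Delta w=(f(v))^2/u>0$, so that $-\Delta v>0$ throughout $\Om$.

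The computational heart is to integrate the identity $L(u,v)=R(u,v)$ over $\Om$. By part~(i) of Proposition~\ref{npic},
\begin{equation*}
\int_\Om L(u,v)\,dx=\int_\Om R(u,v)\,dx=\int_\Om|\Delta u|^2\,dx-\int_\Om\Delta\!\left(\frac{u^2}{f(v)}\right)\Delta v\,dx.
\end{equation*}
Integrating the second term by parts twice (the boundary terms vanish because $u=\Delta u=v=\Delta v=0$ on $\pa\Om$) moves both Laplacians onto $v$, giving $\int_\Om \frac{u^2}{f(v)}\,\Delta^2 v\,dx$, while a parallel integration by parts on the first term yields $\int_\Om u\,\Delta^2 u\,dx$. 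Substituting the two equations of~\eqref{singsys}, namely $\Delta^2 u=f(v)$ and $\Delta^2 v=(f(v))^2/u$, the first integral becomes $\int_\Om u\,f(v)\,dx$ and the second becomes $\int_\Om \frac{u^2}{f(v)}\cdot\frac{(f(v))^2}{u}\,dx=\int_\Om u\,f(v)\,dx$. These cancel exactly, so $\int_\Om L(u,v)\,dx=0$.

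Since part~(ii) of Proposition~\ref{npic} guarantees $L(u,v)\geq 0$ pointwise, a nonnegative integrand with zero integral forces $L(u,v)=0$ a.e.\ in $\Om$. Then part~(iii) delivers the conclusion $u=c_1v+c_2$ for constants $c_1,c_2$, completing the proof.

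I expect the main obstacle to be the regularity and boundary bookkeeping rather than the algebra. Proposition~\ref{npic} is stated for twice continuously differentiable $u,v$, whereas here $(u,v)$ is only a \emph{weak} solution, so the double integration by parts and the pointwise cancellation must be justified in the weak formulation; one should either test the weak equations directly against $u^2/f(v)$ and $u$ (checking these are admissible test functions, using $u,v\in H^2\cap H_0^1$ and the positivity lower bounds on $u,f(v)$ so that $u^2/f(v)$ is well-defined and has the right integrability) or invoke elliptic regularity to upgrade the solutions. The singular factor $1/u$ in the second equation, combined with $u>0$ in the open set $\Om$ but $u=0$ on $\pa\Om$, is the delicate point: one must ensure the products $u\,f(v)$ and $u^2/f(v)$ are integrable near $\pa\Om$ and that no boundary contribution is silently dropped in the integrations by parts.
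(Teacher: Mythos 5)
Your proposal is correct and takes essentially the same route as the paper: the paper likewise plugs the test functions $\phi_1=u$ and $\phi_2=u^2/f(v)$ into the weak formulations (the weak-form version of your double integration by parts), deduces $\int_\Om R(u,v)\,dx=0$, uses nonnegativity to force $L(u,v)=0$, secures $-\De v>0$ by the maximum-principle argument of Lemma~\ref{l1}, and concludes via part (iii) of Proposition~\ref{npic}. Your concluding caveat about testing the weak equations directly rather than integrating by parts in the strong form is precisely what the paper does, so the regularity bookkeeping you flag is already resolved by your own suggested fix.
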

\begin{proof}
Let $(u,v)$ be a weak solution of \eqref{singsys}. Then
\begin{equation}\label{1}
\int_\Om \De u \De \phi_1 dx=\int_\Om f(v) \phi_1  dx.
\end{equation}
\begin{equation}\label{2}
\int_\Om \De v \De \phi_2 dx =\int_\Om \frac{f^2(v)}{u} \phi_2 dx
\end{equation}
hold for any $\phi_1,\phi_2 \in H^2(\Om)\cap H_0^1(\Om).$ Now choosing $\phi_1 =u$ and $\phi_2=\displaystyle\frac{u^2}{f(v)}$ in \eqref{1} and \eqref{2}, respectively, we obtain
\begin{equation*}
\int_\Om |\De u|^2 dx =\int_\Om f(v) u dx=\int_\Om \De v \De \left(\frac{u^2}{f(v)}\right) dx.
\end{equation*}
Hence we have
\begin{equation*}
\int_\Om R(u,v) dx=\int_\Om \left[|\De u|^2- \De v \De \left(\frac{u^2}{f(v)}\right)\right] dx=0.
\end{equation*}
By positivity of $R(u,v),$ we get $R(u,v)=0$ and by Lemma\,\ref{l1}, we have
$$-\De u>0,\,\,\,\,-\De v>0\,\,\,\,\,\text{in}\,\,\Om.$$
Now an application of Proposition \ref{npic} yields that $u=c_1v+c_2$
for some constants $c_1$ and $c_2.$
\end{proof}

Let us consider the following eigenvalue problem
\begin{equation}\label{ev}
\De^2 u = \lambda a(x) u\,\,\,\mbox{in}\,\,\Omega,\,\,\,\,u= \De u = 0\,\,\,\mbox{on}\,\,\partial\Om,
\end{equation}
where $\Om\subset \R^{N}$ is an open, bounded subset and $0\leq a\in L^{\infty}(\Om).$
We recall that a value $\la\in \R$ is an eigenvalue of \eqref{ev} if and only if there exists $u\in H^{2}(\Om)\cap H_{0}^{1}(\Om)/\{0\}$ such that
\begin{equation}
 \int_{\Om} \De u. \De \phi dx = \int_{\Om} a(x) u \phi dx,\,\,\,\forall\,\,\phi \in H^{2}(\Om)\cap H_{0}^{1}(\Om)
\end{equation}
and $u$ is called an eigenfunction associated with $\la.$ The least positive eigenvalue of \eqref{ev}  is defined as
\begin{equation}\nonumber
 \la_{1}= \inf\left\{ \int_{\Om} |\De u|^{2} dx:\,\,\,\,u\in H^{2}(\Om)\cap H_{0}^{1}(\Om)\,\,\,\,\mbox{and} \,\,\,\int_{\Om} a(x) |u|^{2} dx=1     \right\}.
\end{equation}

\begin{lem}
$\la_{1}$ is attained.
\end{lem}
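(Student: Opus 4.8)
The plan is to prove this by the direct method of the calculus of variations. First I would choose a minimizing sequence $(u_n)\subset H^{2}(\Om)\cap H_{0}^{1}(\Om)$ subject to the constraint $\int_{\Om} a(x)|u_n|^{2}\,dx=1$ and with $\int_{\Om}|\De u_n|^{2}\,dx\to\la_{1}$. (One should first note the admissible set is nonempty, which holds as soon as $a$ is not identically zero, so that $\la_{1}$ is a genuine infimum of a nonempty set of nonnegative reals and hence finite.)

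The key preliminary step is to show that $(u_n)$ is bounded in $H^{2}(\Om)\cap H_{0}^{1}(\Om)$. For this I would use that, on a smooth bounded domain, the quantity $\big(\int_{\Om}|\De u|^{2}\,dx\big)^{1/2}$ is a norm on $H^{2}(\Om)\cap H_{0}^{1}(\Om)$ equivalent to the full $H^{2}$ norm; this is a consequence of the standard elliptic a priori estimate $\|u\|_{H^{2}}\leq C\,\|\De u\|_{L^{2}}$ for the Dirichlet problem $-\De u=g$, $u\in H_{0}^{1}(\Om)$. Since $\int_{\Om}|\De u_n|^{2}\,dx$ converges to $\la_{1}$ and is therefore bounded, the equivalence of norms yields a uniform bound on $\|u_n\|_{H^{2}}$. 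By reflexivity of the Hilbert space $H^{2}(\Om)\cap H_{0}^{1}(\Om)$, I can then pass to a subsequence (not relabeled) with $u_n\rightharpoonup u$ weakly, and by the Rellich--Kondrachov compact embedding $H^{2}(\Om)\hookrightarrow L^{2}(\Om)$ I may also assume $u_n\to u$ strongly in $L^{2}(\Om)$.

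With strong $L^{2}$ convergence and $a\in L^{\infty}(\Om)$, the constraint passes to the limit, $\int_{\Om}a|u|^{2}\,dx=\lim_{n}\int_{\Om}a|u_n|^{2}\,dx=1$, so $u$ is admissible and in particular $u\not\equiv 0$. Finally, the functional $w\mapsto\int_{\Om}|\De w|^{2}\,dx$ is the square of a norm, hence convex and strongly continuous, so it is weakly lower semicontinuous; this gives $\int_{\Om}|\De u|^{2}\,dx\leq\liminf_{n}\int_{\Om}|\De u_n|^{2}\,dx=\la_{1}$. Since $u$ is admissible, the definition of $\la_{1}$ as an infimum forces the reverse inequality, whence $\int_{\Om}|\De u|^{2}\,dx=\la_{1}$ and the infimum is attained at $u$. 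The main obstacle I expect is the coercivity step, namely justifying that $\|\De u\|_{L^{2}}$ controls $\|u\|_{H^{2}}$ on $H^{2}(\Om)\cap H_{0}^{1}(\Om)$; this is where the smoothness of $\partial\Om$ and elliptic regularity for the Dirichlet Laplacian are genuinely used, everything else being a routine application of weak compactness and lower semicontinuity.
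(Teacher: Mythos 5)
Your argument is correct and is essentially the paper's own proof with the abstraction unpacked: the paper verifies the same ingredients (coercivity and weak lower semicontinuity of the Dirichlet-type functional $J$, weak closedness of the constraint set $M$) and then cites Theorem 1.2 of Struwe, whereas you run the minimizing-sequence argument directly, including the one genuinely nontrivial point --- that $\|\De u\|_{L^{2}}$ controls $\|u\|_{H^{2}}$ on $H^{2}(\Om)\cap H_{0}^{1}(\Om)$ via elliptic regularity --- and you correctly observe that nonemptiness of the constraint set needs $a\not\equiv 0$, a point the paper glosses with ``$a\geq 0$ so $M\neq\emptyset$.'' The only step the paper includes that you omit is the Lagrange multiplier identity $J'(u)=\la_{1}G'(u)$, which upgrades the minimizer to an eigenfunction; this is not required for the bare statement that $\la_{1}$ is attained, but you would want to append it (using that $G'(u)\neq 0$ at the minimizer, since $\int_{\Om}a(x)|u|^{2}dx=1$) before invoking the lemma in the later results about the first eigenfunction.
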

\begin{proof}
For showing the above infimum is attained, let us introduce the functionals
$$J,\,\,G:H^{2}(\Om)\cap H_{0}^{1}(\Om)\longrightarrow \R\,\,\,\,\,\,\mbox{defined by}$$
$$ J(u) = \frac{1}{2}  \int_{\Om} |\De u|^{2} dx,\,\,\,\,G(u)=   \frac{1}{2} \int_{\Om} a(x) |u|^{2} dx,\,\,\,\,u\in H^{2}(\Om)\cap H_{0}^{1}(\Om).$$
It is easy to see that $J$ and $G$ are $C^{1}$ functionals. By definition, $\la\in \R$ is an eigenvalue of \eqref{ev} if and only if
there exists $u\in H^{2}(\Om)\cap H_{0}^{1}(\Om)/\{0\}$ such that
$$J'(u) = \la G'(u).$$
Le us define
$$ M=   \left\{ u\in  H^{2}(\Om)\cap H_{0}^{1}(\Om) |\frac{1}{2} \int_{\Om} a(x) |u|^{2} dx=1 \right\}.$$
Since $a\geq 0$ so $M \neq \emptyset$ and $M$ is a $C^{1}$ manifold in $H^{2}(\Om)\cap H_{0}^{1}(\Om).$
It is also easy to see that $J$ is coercive and (sequentially) weakly lower semicontinuous on $M$ and $M$ is a weakly closed subset of $H^{2}(\Om)\cap H_{0}^{1}(\Om).$
Now by an application of Theorem 1.2\,\,\cite{str},   $J$ is bounded from below on $M$
and attains its infimum in $M.$ Also by Lagrange's multiplier rule
$$  J'(u) = \la_{1} G'(u)      $$
and therefore $\la_{1}$ is attained.
\end{proof}

In the next lemma, we show that the first eigenfunction $u$ corresponding to the first eigenvalue $\la_1$ of \eqref{ev} is of one sign.
We use the following theorem.

\begin{theorem}\cite{gazz} (Dual cone decomposition theorem)\label{dc}\,\,\,Let $H$ be a Hilbert space with scalar product $(\,\,., )_{H}.$ Let $K\subset H$ be a  closed, convex nonempty cone. Let $K^{*}$
 be its dual cone, namely
 $$ K^{*}= \{w\in H |   (w,\,v)_{H} \leq 0,\,\,\,\forall\,\,v\in K \}.$$
Then for anu $u\in H,$ there exists a unique $(u_{1},\,u_{2})\in K\times K^{*}$ such that
\begin{equation}\label{dec}
 u= u_{1} + u_{2},\,\,\,\,(u_{1},\,u_{2})_{H}=0.
\end{equation}
In particular,$$||u ||^{2}_{H}=   ||u_1 ||^{2}_{H} +   ||u_2 ||^{2}_{H}.$$
 Moreover, if we decompose arbitrary $u,\,v\in H$ according to \eqref{dec}, i.e.,
 $$ ||u-v ||^{2}_{H}   \geq || u_{1} - v_{1}||_{H}^{2} +   || u_{2} - v_{2}||_{H}^{2}. $$
 In particular, the projection onto $K$ is Lipschitz continuous.
\end{theorem}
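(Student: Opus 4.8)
The plan is to realise the decomposition through the metric projection onto the closed convex cone $K$. Since $K$ is a nonempty closed convex subset of the Hilbert space $H$, the Hilbert projection theorem gives, for each $u\in H$, a unique nearest point $u_1:=P_K(u)\in K$, characterised variationally by $(u-u_1,\,v-u_1)_H\leq 0$ for all $v\in K$. I would simply set $u_1:=P_K(u)$ and $u_2:=u-u_1$, and then verify that this pair enjoys all the asserted properties, reading the abstract identities off from the variational inequality.

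First I would extract the orthogonality $(u_1,u_2)_H=0$ by exploiting that $K$ is a cone. Closedness plus the scaling property guarantee $0\in K$ and $2u_1\in K$; substituting $v=0$ and $v=2u_1$ into the variational inequality yields $(u_2,u_1)_H\geq 0$ and $(u_2,u_1)_H\leq 0$ respectively, hence $(u_1,u_2)_H=0$. To show $u_2\in K^{*}$, for arbitrary $v\in K$ I would combine the variational inequality with this orthogonality to obtain $(u_2,v)_H=(u_2,v-u_1)_H\leq 0$, which is precisely the defining condition of the dual cone. The Pythagorean identity $\|u\|_H^2=\|u_1\|_H^2+\|u_2\|_H^2$ is then immediate.

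For uniqueness I would show that any admissible decomposition must recover the projection: if $u=a+b$ with $a\in K$, $b\in K^{*}$ and $(a,b)_H=0$, then for every $v\in K$ one computes $(u-a,\,v-a)_H=(b,v)_H-(b,a)_H=(b,v)_H\leq 0$, so $a$ satisfies the variational characterisation of $P_K(u)$ and therefore $a=u_1$, which forces $b=u_2$. For the nonexpansiveness statement I would take $u=u_1+u_2$, $v=v_1+v_2$ and expand $\|u-v\|_H^2=\|u_1-v_1\|_H^2+\|u_2-v_2\|_H^2+2(u_1-v_1,\,u_2-v_2)_H$, reducing the claim to the nonnegativity of the cross term. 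Expanding that inner product and using the two orthogonality relations leaves $-(u_1,v_2)_H-(v_1,u_2)_H$, each summand of which is nonnegative because $u_1,v_1\in K$ while $v_2,u_2\in K^{*}$.

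The only genuinely substantive point is the systematic use of the cone property, namely that nonnegative scaling keeps points inside $K$, to upgrade the generic convex-projection inequality into the exact orthogonal splitting with $u_2\in K^{*}$; once that is in place, existence, uniqueness, the Pythagorean identity, and the Lipschitz estimate are all routine bookkeeping around the projection theorem. I therefore expect no serious obstacle, and I would be careful only to justify $0\in K$ from closedness of $K$ in case the stated definition of cone does not already include the origin.
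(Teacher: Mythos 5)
Your proof is correct and complete: it is precisely the classical Moreau decomposition argument via the metric projection onto $K$ (set $u_1=P_K(u)$, use the variational inequality $(u-u_1,\,v-u_1)_H\leq 0$ with the test points $v=0$ and $v=2u_1$ to get orthogonality, deduce $u_2\in K^{*}$, verify uniqueness through the variational characterization, and control the cross term $-(u_1,v_2)_H-(v_1,u_2)_H\geq 0$ for the nonexpansiveness estimate), which is exactly the proof in the Gazzola--Grunau--Sweers monograph that the paper cites for this theorem, the paper itself stating the result without proof. Your closing caution about justifying $0\in K$ is well placed but harmless, since under the usual convention that a cone is stable under all nonnegative scalings the origin belongs to $K$ automatically, and otherwise closedness supplies it as you indicate.
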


\begin{lem}\label{l2}
The eigenfunction $u$ corresponding to the first eigenvalue $\la_1$ of \eqref{ev} is of one sign.
\end{lem}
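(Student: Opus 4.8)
The plan is to realise $\la_1$ as the infimum of a Rayleigh quotient on the Hilbert space $H=H^2(\Om)\cap H_0^1(\Om)$ equipped with the scalar product $(u,w)_H=\int_\Om \De u\,\De w\,dx$, and then to apply the dual cone decomposition of Theorem~\ref{dc} to a minimizer. First I would check that $(\cdot,\cdot)_H$ really is a scalar product on $H$: if $\De u=0$ with $u\in H_0^1(\Om)$ then $u\equiv 0$, and by elliptic regularity $\|\De u\|_{L^2}$ is equivalent to the full $H^2$-norm on $H$, so $(H,(\cdot,\cdot)_H)$ is a Hilbert space. Let $u$ be an eigenfunction attached to $\la_1$ (it exists by the preceding lemma), normalized so that $\int_\Om a|u|^2\,dx=1$ and hence $\int_\Om|\De u|^2\,dx=\la_1$; thus $u$ minimizes the quotient $\|\cdot\|_H^2\big/\int_\Om a|\cdot|^2\,dx$.

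Next I would take $K=\{w\in H : w\ge 0 \text{ a.e. in } \Om\}$, a closed convex nonempty cone, and decompose $u=u_1+u_2$ according to Theorem~\ref{dc}, with $u_1\in K$, $u_2\in K^{*}$, $(u_1,u_2)_H=0$ and $\|u\|_H^2=\|u_1\|_H^2+\|u_2\|_H^2$. The heart of the argument, and the step I expect to be the main obstacle, is to translate the abstract membership $u_2\in K^{*}$ into the pointwise sign $u_2\le 0$. Here I would exploit the product structure of the biharmonic operator under the Navier boundary conditions: for $\psi\ge 0$ the function $v=(-\De)^{-1}\psi$ (with zero Dirichlet data) lies in $H$ by elliptic regularity and satisfies $v\ge 0$ by the maximum principle, so $v\in K$. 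Testing $(u_2,v)_H=\int_\Om(-\De u_2)(-\De v)\,dx=\int_\Om(-\De u_2)\psi\,dx\le 0$ for all such $\psi\ge 0$ forces $-\De u_2\le 0$ in $\Om$; since $u_2=0$ on $\pa\Om$, the maximum principle for $-\De$ then yields $u_2\le 0$ in $\Om$. Thus $u_1\ge 0\ge u_2$ pointwise.

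With the signs in hand I would set $\tilde u:=u_1-u_2$. Then $\tilde u\ge 0$, and since $u_1\ge 0\ge u_2$ we have $|u|=|u_1+u_2|\le u_1-u_2=\tilde u$ pointwise. Orthogonality gives $\|\tilde u\|_H^2=\|u_1\|_H^2+\|u_2\|_H^2=\|u\|_H^2=\la_1$, while $a\ge 0$ together with $\tilde u\ge|u|$ gives $\int_\Om a\,\tilde u^2\,dx\ge\int_\Om a|u|^2\,dx=1$. Hence the Rayleigh quotient of $\tilde u$ is $\le\la_1$, and by minimality it must equal $\la_1$, so $\tilde u$ is a nonnegative eigenfunction associated with $\la_1$ (and in fact $\int_\Om a\,\tilde u^2\,dx=1$).

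Finally I would upgrade \emph{nonnegative} to \emph{strictly one sign}. Since $\tilde u\ge 0$, $\tilde u\not\equiv 0$, and $\De^2\tilde u=\la_1 a(x)\tilde u$ with weight $\la_1 a\in L^\infty(\Om)$, $\la_1 a\ge 0$, under the Navier conditions, Lemma~\ref{l1} (applied with the weight $\la_1 a$) gives $-\De\tilde u>0$ in $\Om$; since $\tilde u=0$ on $\pa\Om$, the strong maximum principle then forces $\tilde u>0$ in $\Om$. Therefore $\la_1$ carries a strictly positive eigenfunction, i.e.\ the first eigenfunction is of one sign. If in addition one wants that \emph{every} $\la_1$-eigenfunction is of one sign, I would deduce it from the simplicity of $\la_1$, which follows from the strict positivity just obtained (for instance via the Krein--Rutman theorem applied to the positivity-preserving solution operator $(-\De)^{-1}\circ(-\De)^{-1}$).
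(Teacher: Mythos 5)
Your proposal is correct and takes essentially the same approach as the paper: the paper's proof of Lemma~\ref{l2} merely invokes the dual cone decomposition (Theorem~\ref{dc}) together with the maximum principle for $-\De$ and refers to Ferrero--Gazzola--Weth \cite{far} for the details, and your argument is precisely that argument written out in full (the decomposition $u=u_1+u_2$, the identification $u_2\le 0$ by testing with $(-\De)^{-1}\psi$, the Rayleigh-quotient comparison for $\tilde u=u_1-u_2$, and strict positivity via Lemma~\ref{l1}). Your closing caveat --- that the construction yields \emph{a} positive first eigenfunction and that covering \emph{every} first eigenfunction needs an extra step (the equality case $a\,u_1u_2=0$ a.e., or simplicity) --- is a point the paper itself glosses over in deferring to \cite{far}, so it is a virtue of your write-up rather than a gap relative to the paper.
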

\begin{proof}
 Using Theorem\ref{dc}, and classical maximum principle for $-\De$,  Ferrero et al. \cite{far} obtain the positivity of the minimizers of the problem
$$ S_{p}= \min_{w\in X/\{0\}} \frac{||\De w   ||_{2}^{2}  }{|| w||_{p}^{2}} ,\,\,\, 1 \leq p < \frac{2n}{n-4},$$
where $X=H^{2}(B)\cap H_{0}^{1}(B),\,\,B$ denotes the unit ball in $\R^n.$ The same proof works for eigenfunction $u$ corresponding to the first eigenvalue $\la_1$ of \eqref{ev}
in $\Om.$ For this, we refer to \cite{far} and omit the details.
\end{proof}

Next we show the strict monotonicity of the principle eigenvalue $\la_1.$
\begin{theorem}
Suppose $\Om_1 \subset\Om_2$ and $\Om_1 \neq \Om_2.$ Then $\la_1(\Om_1) > \la_1(\Om_2),$ if both exist.
\end{theorem}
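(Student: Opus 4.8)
The plan is to exploit the variational characterization
\[
\la_1(\Om) = \inf\left\{ \int_\Om |\De u|^2\,dx : u \in H^2(\Om)\cap H_0^1(\Om),\ \int_\Om a(x)|u|^2\,dx = 1 \right\}
\]
together with the fact (Lemma \ref{l2}, and the system/maximum-principle mechanism of Lemma \ref{l1}) that the first eigenfunction is of one sign and strictly positive in the interior. First I would establish the non-strict inequality $\la_1(\Om_1) \ge \la_1(\Om_2)$, and then upgrade it to a strict one using the strict positivity of the eigenfunction on the larger domain.

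For the non-strict inequality the naive instinct is to take a minimizer $u_1$ realizing $\la_1(\Om_1)$, extend it by zero to $\tilde u_1$ on $\Om_2$, and use $\tilde u_1$ as a competitor in the Rayleigh quotient on $\Om_2$. The main obstacle is precisely here: under the hinged (Navier) conditions $u = \De u = 0$ one has $u_1 = 0$ but $\pa u_1/\pa \nu \ne 0$ on $\pa\Om_1$, so the zero-extension has a jump in its normal derivative across $\pa\Om_1$ and hence fails to lie in $H^2(\Om_2)$ (its distributional Laplacian picks up a singular part supported on $\pa\Om_1$). This is the decisive difference from the clamped case, and it is where the argument must be carried out with care rather than by a direct extension.

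To get around this I would reformulate the problem through the system $-\De u = v$, $-\De v = \la a u$ of Lemma \ref{l1} and pass to the equivalent integral eigenvalue equation
\[
u(x) = \la\, T_\Om u(x) = \la \int_\Om \mathcal G_\Om(x,y)\, a(y)\, u(y)\,dy,
\]
where $\mathcal G_\Om(x,y) = \int_\Om G_\Om(x,z) G_\Om(z,y)\,dz$ is built from the Dirichlet Green function $G_\Om$ of $-\De$. Then $\mu_1(\Om) := 1/\la_1(\Om)$ is the largest eigenvalue of the positive, compact, symmetric operator $T_\Om$, with the maximization characterization
\[
\mu_1(\Om) = \max\Big\{ \iint \mathcal G_\Om(x,y)\, a(x)a(y)\, u(x)u(y)\,dx\,dy : \int_\Om a\,u^2 = 1 \Big\}.
\]
The advantage is that a competitor may now be extended by zero in $L^2$, which is unproblematic. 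Since $\Om_1 \subset \Om_2$ forces $G_{\Om_1} \le G_{\Om_2}$ (extend $G_{\Om_1}$ by zero and apply the maximum principle for $-\De$, exactly as in Lemma \ref{l1}), one obtains $\mathcal G_{\Om_1} \le \mathcal G_{\Om_2}$ on $\Om_1\times\Om_1$; testing $\mu_1(\Om_2)$ against the zero-extended first eigenfunction of $\Om_1$ gives $\mu_1(\Om_2) \ge \mu_1(\Om_1)$, i.e. $\la_1(\Om_1) \ge \la_1(\Om_2)$.

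Finally, for strictness I would argue by contradiction. If $\la_1(\Om_1) = \la_1(\Om_2)$, then the zero-extension $\tilde u_1$ of the first $\Om_1$-eigenfunction is a maximizer for $\mu_1(\Om_2)$, hence a first eigenfunction on $\Om_2$, so $\tilde u_1(x) = \la_1(\Om_2)\int_{\Om_2}\mathcal G_{\Om_2}(x,y)a(y)\tilde u_1(y)\,dy$. The right-hand side is strictly positive throughout $\Om_2$, since $\mathcal G_{\Om_2} > 0$ in the interior and $a \ge 0$, $\tilde u_1 \ge 0$ are not identically zero; but the left-hand side vanishes on the nonempty open set $\Om_2 \setminus \overline{\Om_1}$, a contradiction. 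Hence the inequality is strict. The one technical item to secure is the positivity and monotonicity of the Green functions and the equivalence of the differential and integral formulations, all of which rest on the same maximum principle for $-\De$ already used in Lemma \ref{l1}.
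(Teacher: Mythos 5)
Your proposal is correct in substance, but it takes a genuinely different route from the paper. The paper never introduces Green functions: it runs the Picone identity of Lemma~\ref{pic} directly. Taking $\phi\in C_c^\infty(\Om_1)$ and the positive first eigenfunction $u_2$ on $\Om_2$ (with $-\De u_2>0$ in $\Om_2$ by Lemma~\ref{l1}, so that Lemma~\ref{pic} applies), one gets
$0\le\int_{\Om_1}L(\phi,u_2)\,dx=\int_{\Om_1}|\De\phi|^2\,dx-\la_1(\Om_2)\int_{\Om_1}a\phi^2\,dx$,
after moving $\De^2u_2=\la_1(\Om_2)a u_2$ onto $\phi^2/u_2$; letting $\phi\to u_1$ yields $0\le\bigl(\la_1(\Om_1)-\la_1(\Om_2)\bigr)\int_{\Om_1}au_1^2\,dx$, and in the equality case $L(u_1,u_2)\equiv0$ forces $u_1=cu_2$ in $\Om_1$ by Lemma~\ref{pic}(iii), which is impossible since $u_1$ vanishes on the part of $\pa\Om_1$ interior to $\Om_2$ where $u_2>0$. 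Note that this completely sidesteps the $H^2$-extension obstruction you correctly diagnosed: Picone's identity only ever integrates over $\Om_1$, with $u_2$ restricted to it, so no extension of $u_1$ in any Sobolev class is required. Your iterated-kernel reformulation buys something the paper's argument does not have, namely a quantitative source of strictness: $\mathcal G_{\Om_2}(x,y)-\mathcal G_{\Om_1}(x,y)\ge\int_{\Om_2\setminus\Om_1}G_{\Om_2}(x,z)G_{\Om_2}(z,y)\,dz>0$ on $\Om_1\times\Om_1$, whereas the paper relies on the rigidity clause of Lemma~\ref{pic} plus a limiting step. Both arguments ultimately rest on the same mechanism, the splitting of the Navier problem into a cooperative second-order system and the maximum principle.

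One soft spot in your strictness step deserves repair. The maximizer identity $\tilde u_1=\la_1(\Om_2)\,T_{\Om_2}\tilde u_1$ is an identity in $L^2(a\,dx)$, i.e.\ it holds only $a$-almost everywhere; since the paper assumes merely $0\le a\in L^\infty(\Om)$, if $a$ vanishes identically on $\Om_2\setminus\overline{\Om_1}$ you cannot evaluate the identity there, and the contradiction ``left side vanishes, right side is positive'' does not bite as stated. The fix stays inside your framework and is in fact shorter: in the equality case your chain of inequalities forces
$\iint_{\Om_1\times\Om_1}\bigl(\mathcal G_{\Om_2}-\mathcal G_{\Om_1}\bigr)(x,y)\,a(x)a(y)\,u_1(x)u_1(y)\,dx\,dy=0$,
which contradicts the strict kernel inequality displayed above, because $u_1$ may be taken positive (Lemma~\ref{l2}) and $au_1\not\equiv0$ by the normalization $\int_{\Om_1}au_1^2\,dx=1$; no appeal to the maximizer-is-eigenfunction step is then needed. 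Finally, your argument (and the paper's) uses that $\Om_2\setminus\overline{\Om_1}\neq\emptyset$, which can fail for rough inclusions such as a slit removed from $\Om_2$; this is covered by the paper's standing smoothness assumption on the domains, but it is worth stating explicitly where you invoke it.
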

\begin{proof}
Let $u_i$ be a positive eigenfunction associated with $\la_1(\Om_i),\, i=1,2.$ For $\phi\in C_c^\infty(\Om_1)$,
\begin{eqnarray}
\label{sti}
0 &\leq & \int_{\Om_1}L(\phi,u_2)\, dx=\int_{\Om_1} R(\phi,u_2)\,dx \nonumber\\
&=& \int_{\Om_1}\left(|\De \phi|^2-\De(\frac{\phi^2}{u_2})\De u_2\right)\, dx\nonumber \\
& = & \int_{\Om_1} |\De \phi|^2 dx-\int_{\Om_1}\frac{\phi^2}{u_2}\De^2u_2\, dx\nonumber\\
&=& \int_{\Om_1} |\De\phi|^2\, dx-\la_1(\Om_2) \int_{\Om_1}a(x)\phi^2 \, dx
\end{eqnarray}
Letting $\phi\rar u_1$ in \eqref{sti}, we obtain
\[0\leq  \int_{\Om_1} L(u_1,u_2)dx=(\la_1(\Om_1)-\la_1(\Om_2)) \int_{\Om_1}a(x)u_1^2\, dx.\]
This gives $\la_1(\Om_1)-\la_1(\Om_2)\geq 0.$ Now if $\la_1(\Om_1)-\la_1(\Om_2)=0$ then $L(u_1,u_2)=0$ and an application of Lemma\,\ref{pic} implies that
 $u_1=cu_2,$ which is not possible as $\Om_1\subset\Om_2$   and $\Om_1\neq \Om_2.$
This completes the proof.
\end{proof}
In the next theorem, using Picone's identity (Lemma\,\ref{pic}), we show that $\la_1$ is simple, i.e.,
the eigenfunctions associated to it are a constant multiple of each other.
\begin{theorem}
 $\la_{1}$ is simple.
\end{theorem}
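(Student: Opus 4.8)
The plan is to deduce simplicity from Picone's identity (Lemma \ref{pic}) in essentially the same way that strict monotonicity was obtained above. Let $u$ and $v$ be two eigenfunctions associated with $\la_1$. By Lemma \ref{l2} each of them is of one sign, so after multiplying by $-1$ if necessary I may assume $u>0$ and $v>0$ in $\Om$. Both functions are weak solutions of $\De^2 w = \la_1 a(x) w$; absorbing $\la_1$ into the weight $\tilde a:=\la_1 a\in L^\infty(\Om)$, which is still nonnegative since $\la_1>0$, Lemma \ref{l1} applies and yields $-\De u>0$ and $-\De v>0$ in $\Om$. In particular the hypotheses $v>0$, $-\De v>0$ required in Lemma \ref{pic} are satisfied.

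Next I would feed $v$ into Picone's identity. For $\phi\in C_c^\infty(\Om)$, Lemma \ref{pic} gives
\begin{eqnarray*}
0 &\leq& \int_\Om L(\phi,v)\,dx = \int_\Om R(\phi,v)\,dx\\
&=& \int_\Om |\De \phi|^2\,dx - \int_\Om \De\left(\frac{\phi^2}{v}\right)\De v\,dx.
\end{eqnarray*}
Integrating by parts and using that $v$ is a weak eigenfunction, $\De^2 v=\la_1 a(x)v$, I would rewrite the second integral as $\int_\Om \frac{\phi^2}{v}\De^2 v\,dx = \la_1\int_\Om a(x)\phi^2\,dx$, so that
\[ 0 \leq \int_\Om L(\phi,v)\,dx = \int_\Om |\De\phi|^2\,dx - \la_1\int_\Om a(x)\phi^2\,dx. \]

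The crucial step is then to let $\phi\rar u$. Since $u$ is itself an eigenfunction for $\la_1$, testing its weak formulation against $u$ gives $\int_\Om |\De u|^2\,dx = \la_1\int_\Om a(x)u^2\,dx$, so the right-hand side of the displayed inequality tends to $0$. Hence $\int_\Om L(u,v)\,dx=0$, and since $L(u,v)\geq 0$ pointwise by part (ii) of Lemma \ref{pic}, we conclude $L(u,v)=0$ a.e. in $\Om$. Part (iii) of Lemma \ref{pic} then forces $u=\al v$ for some $\al\in\R$, which is precisely the simplicity of $\la_1$.

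The main obstacle is the justification of the passage $\phi\rar u$ together with the integration by parts. On the one hand $u\in H^2(\Om)\cap H_0^1(\Om)$ need not lie in the $H^2$-closure of $C_c^\infty(\Om)$, and on the other hand the test function $\phi^2/v$ (and its limit $u^2/v$) must remain a legitimate element of $H^2(\Om)\cap H_0^1(\Om)$ so that the boundary terms in the integration by parts vanish. Both issues require a lower control on $v$ near $\pa\Om$, i.e.\ a Hopf-type boundary estimate for the cooperative system of Lemma \ref{l1}, ensuring that $u^2/v$ stays bounded and admissible. Once this approximation is secured --- for instance by a truncation/density argument as in the monotonicity theorem above --- the identity-plus-positivity mechanism of Picone closes the argument immediately.
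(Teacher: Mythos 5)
Your overall mechanism --- Picone's identity plus the two weak formulations, forcing $\int_\Om L(u,v)\,dx=0$ and invoking part (iii) of Lemma~\ref{pic} --- is exactly the paper's, and your preliminary reductions ($u,v>0$ via Lemma~\ref{l2}, $-\De u>0$, $-\De v>0$ via Lemma~\ref{l1}) match the paper as well. But the step you yourself flag as ``the main obstacle'' is a genuine, unfilled gap, and as written the argument does not close. The inequality $0\le\int_\Om|\De\phi|^2\,dx-\la_1\int_\Om a\phi^2\,dx$ for $\phi\in C_c^\infty(\Om)$ is by itself nearly vacuous: it only says $\la_1$ lies below the Rayleigh quotient over the $H^2$-closure of $C_c^\infty(\Om)$, and the eigenfunction $u$, which satisfies Navier conditions $u=\De u=0$ on $\pa\Om$, does \emph{not} belong to that closure, so ``letting $\phi\rar u$'' is not available. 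Moreover, even granting some approximating sequence, the passage $\int_\Om L(\phi,v)\,dx\rar\int_\Om L(u,v)\,dx$ requires control of the singular weights $\De v/v$ and $\na v/v$ near $\pa\Om$ where $v$ vanishes; the Hopf-type lower bound $v\gtrsim \operatorname{dist}(x,\pa\Om)$ for the system, which you invoke to make $u^2/v$ admissible, is nowhere proved. Nor can you borrow a ``truncation/density argument from the monotonicity theorem above'': there the test functions had compact support in $\Om_1\subsetneq\Om_2$, so $u_2$ was bounded below on their supports and no boundary degeneracy ever arose --- that situation does not transfer to two eigenfunctions on the same domain.

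The paper avoids all of this with a different regularization: instead of approximating the \emph{first} argument $u$ by compactly supported functions, it shifts the \emph{second} argument, applying Lemma~\ref{pic} to the pair $(u,\,v+\epsilon)$ for $\epsilon>0$. Since $v+\epsilon\ge\epsilon>0$ and $\De(v+\epsilon)=\De v$, the function $u^2/(v+\epsilon)$ is directly an admissible test function in the weak formulation of $\De^2 v=\la_1 a(x) v$ (no density argument, no Hopf lemma), while testing $u$'s own equation against $u$ gives $\int_\Om|\De u|^2\,dx=\la_1\int_\Om a u^2\,dx$. Combining the two identities yields
\begin{equation*}
0\le \int_\Om L(u,\,v+\epsilon)\,dx=\la_1\int_\Om a(x)\left[u^2-v\,\frac{u^2}{v+\epsilon}\right]dx
=\la_1\int_\Om a(x)\,\frac{\epsilon\, u^2}{v+\epsilon}\,dx,
\end{equation*}
and the right-hand side tends to $0$ as $\epsilon\rar 0$ (the integrand is dominated by $a u^2$ and vanishes pointwise), whence $L(u,v)=0$ and $u=\al v$ by Lemma~\ref{pic}(iii). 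If you wish to salvage your route, the missing ingredient you must actually establish is the pair of boundary estimates $v(x)\ge c_1\,\operatorname{dist}(x,\pa\Om)$ and $u(x)\le c_2\,\operatorname{dist}(x,\pa\Om)$, together with the resulting admissibility of $u^2/v$ in $H^2(\Om)\cap H_0^1(\Om)$; the $\epsilon$-shift renders all of that unnecessary, which is precisely its point.
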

\begin{proof}
 Let $u$ and $v$ be two eigenfunctions associated with $\la_{1}.$ From Lemma\,\ref{l2}, without any loss of generality, we can assume that
 $u$ and $v$ are positve in $\Om.$ Now by Lemma\,\ref{l1}, we have
 $$-\De u>0,\,\,\,\,-\De v>0  \,\,\,\,\mbox{in} \,\,\Om.          $$
 Let $\epsilon>0.$ From Lemma\,\ref{pic}, we have
\begin{align}\label{w1}
& 0\leq \int_\Omega L(u,\,v+ \epsilon) dx \nonumber\\
&=\int_\Omega  R(u,\,v+ \epsilon) dx \nonumber \\
&= \int_{\Om} \left[|\De u|^{2}  - \De \left(\frac{u^2}{v+\epsilon} \right) \De v \right]dx \nonumber    \\
&= \la_{1} \int_{\Om } a(x) u^{2} dx- \int_{\Om} \De \left(\frac{u^2}{v+\epsilon} \right) \De v dx.
\end{align}
The function $\phi= \frac{u^2}{v+\epsilon}$ and is admissible in the weak formulation of
$$  \De^{2}v = \la_{1} a(x) v, \,\,\,\,\mbox{i.e.,}                   $$
\begin{equation}\label{w2}
\int_{\Om} \De v \De \left(\frac{u^2}{v+\epsilon} \right) dx= \la_{1} \int_{\Om} a(x) v \left(\frac{u^2}{v+\epsilon}\right) dx.
 \end{equation}
From \eqref{w1} and \eqref{w2}, we get
 \begin{align*}
& 0\leq \int_\Omega L(u,\,v+ \epsilon) dx \nonumber\\
&=\la_{1}  \int_\Omega  a(x) \left[u^{2} - v  \left( \frac{u^2}{v+\epsilon}\right)     \right] dx.    
\end{align*}
Letting $\epsilon\longrightarrow 0,$ in the above inequality, we get
 $$ L(u,\,v) = 0        $$
and again by an application of Lemma\,\ref{pic}, there exists $\al\in \R$ such that $$u= \al\,v,$$
which proves the simplicity of $\la_{1}.$
 \end{proof}
Next, we show the sign changing nature of any eigenfunction $v$ associated to a positive eigenvalue $0<\la\neq \la_{1}.$
\begin{prop}
Any eigenfunction $v$ associated to a positive eigenvalue $0<\la\neq \la_{1}$ changes sign.
\end{prop}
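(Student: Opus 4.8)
The plan is to argue by contradiction, recycling the Picone machinery that already gave the simplicity of $\la_1$. Suppose, to the contrary, that some eigenfunction $v$ associated with a positive eigenvalue $\la \neq \la_1$ does \emph{not} change sign. Replacing $v$ by $-v$ if necessary, I may assume $v \geq 0$ in $\Om$ with $v \not\equiv 0$. Since $a \geq 0$ and $\la > 0$, the function $v$ is a nonnegative weak solution of $\De^2 v = (\la a)\, v$ with the nonnegative weight $\la a \in L^\infty(\Om)$, so Lemma \ref{l1} applies and yields $-\De v > 0$ in $\Om$; combined with $v = 0$ on $\pa\Om$ and the maximum principle, this upgrades to $v > 0$ in $\Om$. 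Let $u_1$ be the positive first eigenfunction furnished by Lemma \ref{l2}; by Lemma \ref{l1} it likewise satisfies $-\De u_1 > 0$ in $\Om$. Thus the pair $(u_1, v)$ satisfies all hypotheses of Picone's identity, Lemma \ref{pic}.

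Next I would run the same integration argument as in the simplicity theorem, but with these two eigenfunctions and with the regularization $v + \epsilon$ so that the test function stays admissible. For $\epsilon > 0$, Lemma \ref{pic} gives
\[
0 \leq \int_\Om L(u_1, v+\epsilon)\,dx = \int_\Om R(u_1, v+\epsilon)\,dx = \int_\Om |\De u_1|^2\,dx - \int_\Om \De\Big(\frac{u_1^2}{v+\epsilon}\Big)\De v\,dx.
\]
Testing $\De^2 u_1 = \la_1 a u_1$ with $u_1$ gives $\int_\Om |\De u_1|^2\,dx = \la_1 \int_\Om a u_1^2\,dx$, while inserting the admissible function $u_1^2/(v+\epsilon)$ into the weak formulation of $\De^2 v = \la a v$ gives $\int_\Om \De v\,\De\big(u_1^2/(v+\epsilon)\big)\,dx = \la \int_\Om a\, v\, u_1^2/(v+\epsilon)\,dx$. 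Substituting,
\[
0 \leq \int_\Om L(u_1, v+\epsilon)\,dx = \la_1 \int_\Om a u_1^2\,dx - \la \int_\Om a\,\frac{v}{v+\epsilon}\,u_1^2\,dx,
\]
and letting $\epsilon \rar 0$ (dominated convergence, the integrand being bounded by $a u_1^2 \in L^1$) yields $0 \leq (\la_1 - \la)\int_\Om a u_1^2\,dx$.

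Finally, $u_1 \not\equiv 0$ forces $\int_\Om a u_1^2\,dx > 0$ (else $\int_\Om |\De u_1|^2\,dx = \la_1 \int_\Om a u_1^2\,dx = 0$ and $u_1 \equiv 0$), so I conclude $\la \leq \la_1$. But $\la_1$ is by definition the least positive eigenvalue, hence $\la \geq \la_1$, forcing $\la = \la_1$ and contradicting $\la \neq \la_1$. Therefore no one-signed eigenfunction can correspond to a positive eigenvalue other than $\la_1$, i.e., $v$ must change sign. The one delicate point I would treat most carefully is the passage $\epsilon \rar 0$ with the singular quotient $u_1^2/(v+\epsilon)$: one must check that $u_1^2/(v+\epsilon) \in H^2(\Om)\cap H_0^1(\Om)$ so it is a legitimate test function, and that the boundary decay of $v$ near $\pa\Om$ does not spoil the dominated-convergence step. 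Everything else is a direct transcription of the simplicity argument.
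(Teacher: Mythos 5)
Your proposal is correct and is essentially identical to the paper's own proof: the same contradiction setup with $v\geq 0$ upgraded to $v>0$ via Lemma \ref{l1}, the same application of Picone's identity (Lemma \ref{pic}) to the pair $(u_1, v+\epsilon)$, the same test function $u_1^2/(v+\epsilon)$ in the weak formulation of $\De^2 v = \la a v$, and the same limit $\epsilon\rar 0$ yielding $0\leq(\la_1-\la)\int_\Om a\,u_1^2\,dx$. If anything, you finish more carefully than the paper, which declares a contradiction outright, whereas you make explicit that the inequality gives $\la\leq\la_1$ and that the minimality of $\la_1$ then forces $\la=\la_1$.
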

\begin{proof}
Assume by contradiction that $v\geq 0,$ the case $v\leq 0$ can be dealt similarly. By Lemma\,\ref{l1}, $v>0$ in $\Om.$ Let $\phi>0$
be an eigenfunction associated with $\la_{1}>0.$ For any $\epsilon>0,$ we apply Lemma\,\ref{pic} to the pair $\phi,\,v+\epsilon$
and get
\begin{align}\label{wwv1}
& 0\leq  \int_\Omega L(\phi,\,v+\epsilon)  dx \nonumber\\
&=  \int_\Omega R(\phi,\,v+\epsilon) dx \nonumber \\
&= \int_{\Om} \left[|\De \phi|^{2} - \De \left( \frac{\phi^{2} }{v+\epsilon} \right)  \De v\right] dx \nonumber \\
&= \int_{\Om} \left[\la_{1} a(x) \phi^{2} - \De \left( \frac{\phi^{2} }{v+\epsilon} \right)  \De v  \right] dx.
\end{align}
For every $\phi\in H^{2} (\Om) \cap H_{0}^{1}(\Om),\,\,\frac{\phi^2}{v+\epsilon}\in H^{2} (\Om) \cap H_{0}^{1}(\Om) $
and is admissible in the weak formulation of
 $$\De^{2} v= \la a(x) v\,\,\mbox{in}\,\,\Om;\,\,\,v= \De v =0\,\,\,\,\mbox{on}\,\,\partial\Om.      $$
 This implies that
 \begin{equation}\label{23}
  \int_{\Om}  \De v \De \left( \frac{\phi^{2} }{v+\epsilon} \right)dx= \la \int_{\Om} a(x) v\frac{\phi^{2} }{v+\epsilon} dx.
 \end{equation}
From \eqref{wwv1} and \eqref{23}, we get
 \begin{align}
 0\leq  \int_\Omega  \left[\la_{1} a(x) \phi^{2}-  \la a(x) v\frac{\phi^{2} }{v+\epsilon}\right]dx. \nonumber
\end{align}
 Letting $\epsilon\longrightarrow 0$ in the above inequality, we get
\begin{eqnarray*}\label{u1}
0 &\leq  (\la_{1} - \la)   \int_\Omega a(x) \phi^{2} dx, \nonumber
\end{eqnarray*}
 which is a contradiction, because $ \int_\Omega a(x) \phi^{2} dx>0$
 and hence $v$ must change sign.
\end{proof}
For the application of Lemma\,\ref{pic} on Morse index, let us consider the following boundary value problem

\begin{equation}\label{s1}
 \De^{2}u = a(x) G(u)\,\,\,\mbox{in}\,\,\Om;\,\,\,\,u= \De u = 0\,\,\,\mbox{on} \,\,\partial\Om.
\end{equation}
For the existence of positive solution to the equations similar to \eqref{s1}, we refer the reader to \cite{gon}. By the standard elliptic regularity theory,
$u \in C^{4}(\Om)\cap C^{3}(\bar{\Om}).$
We shall assume that there exists a positive $C^4$ solution $u$ of the boundary value
problem \eqref{s1}.
For the solution $u \in C^{4}(\Om),$ the Morse index is defined via the eigenvalue problem for the linearization at $u:$
\begin{definition}
{\bf Morse index:}\,\,The Morse index of a solution $u$ of \eqref{s1} is the number of negative eigenvalues of the linearized operator
\be
\De^{2}- a(x) G'(u)
\ee
acting on $H^{2} (\Om) \cap H_{0}^{1}(\Om),$ i.e.,
the number of eigenvalues $\la$  such that $\la<0,$ and the boundary value problem
\be
\De^{2} w- a(x) G'(u)w= \la w\,\,\,\mbox{in}\,\,\,\,\Om;\,\,\,w=0= \De w\,\,\mbox{on}\,\,\partial\Om
\ee
has a nontrivial solution $w$ in $H^{2} (\Om) \cap H_{0}^{1}(\Om).$
\end{definition}
The next theorem gives an application of Lemma\,\ref{pic}.
\begin{theorem}
Let us consider\,\eqref{s1}. Let $a\in C^{\alpha}(\Om),\,\,\,0<\alpha<1$ and $G\in C^{1}(\R,\,\R)$ be
such that
$$ \frac{G(v)}{v} \geq G'(0)\geq 0,\,\,\,\forall\,\,0<v\in \R.$$
Then the trivial solution  of \eqref{s1} has Morse index $0.$
\end{theorem}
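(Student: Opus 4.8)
The plan is to reduce the claim to a single coercivity inequality and then derive that inequality from Picone's identity applied to the given positive solution. Since the Morse index of $u\equiv 0$ is by definition the number of negative eigenvalues of the self-adjoint operator $\De^{2}-a(x)G'(0)$ on $H^{2}(\Om)\cap H_{0}^{1}(\Om)$, and this operator has compact resolvent on the bounded domain $\Om$, its Morse index vanishes precisely when its quadratic form is nonnegative. Thus I would first record that it suffices to prove
\[\int_\Om |\De\phi|^{2}\,dx\;\geq\;G'(0)\int_\Om a(x)\,\phi^{2}\,dx\qquad\text{for every }\phi\in H^{2}(\Om)\cap H_{0}^{1}(\Om).\]

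To reach this, I would bring in the assumed positive solution $u\in C^{4}(\Om)$ of \eqref{s1} as the comparison function in Lemma \ref{pic}. Before applying Picone I must check its hypotheses for the pair $(\phi,u)$: positivity of $u$ is assumed, and the sign $-\De u>0$ follows from Lemma \ref{l1} after rewriting $\De^{2}u=a(x)G(u)$ as $\De^{2}u=\tilde a(x)\,u$ with $\tilde a:=a\,G(u)/u$. Here $\tilde a\geq 0$ because $u>0$, $a\geq 0$, and $G(u)/u\geq G'(0)\geq 0$, while $\tilde a\in L^{\infty}(\Om)$ since $G(u)/u\to G'(0)$ as $u\to 0$; hence Lemma \ref{l1} applies and gives $-\De u>0$ in $\Om$.

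The central computation then mirrors the earlier applications in Section 3. For $\var>0$ I would apply Lemma \ref{pic} to the pair $(\phi,u+\var)$ --- the shift keeps the denominator bounded below so that $\phi^{2}/(u+\var)$ is an admissible test function --- and integrate, using $\De(u+\var)=\De u$:
\[0\leq \int_\Om L(\phi,u+\var)\,dx=\int_\Om |\De\phi|^{2}\,dx-\int_\Om \De\!\left(\frac{\phi^{2}}{u+\var}\right)\De u\,dx.\]
Testing the weak form of $\De^{2}u=a(x)G(u)$ against $\phi^{2}/(u+\var)$ converts the last integral into $\int_\Om a(x)G(u)\,\phi^{2}/(u+\var)\,dx$, and letting $\var\to 0$ (monotone convergence, since $G(u)\geq 0$) together with $G(u)/u\geq G'(0)$ and $a\geq 0$ yields
\[\int_\Om |\De\phi|^{2}\,dx\;\geq\;\int_\Om a(x)\,\phi^{2}\,\frac{G(u)}{u}\,dx\;\geq\;G'(0)\int_\Om a(x)\,\phi^{2}\,dx,\]
which is exactly the coercivity inequality above. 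Consequently $\De^{2}-a(x)G'(0)$ has no negative eigenvalue and the trivial solution has Morse index $0$.

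I expect the main difficulty to be analytic rather than algebraic: verifying that $\phi^{2}/(u+\var)$ is a legitimate element of $H^{2}(\Om)\cap H_{0}^{1}(\Om)$, so that it may be used as a test function, and justifying the limit $\var\to 0$ near $\pa\Om$ where $u$ vanishes. For bounded $\phi$ this is routine, since $u+\var$ is smooth and bounded below and the limit is handled by monotone convergence as $1/(u+\var)$ increases to $1/u$. The remaining, more delicate point is to upgrade the inequality from bounded test functions to all of $H^{2}(\Om)\cap H_{0}^{1}(\Om)$, which I would carry out by a Sobolev-embedding and truncation approximation adapted to the ambient dimension $N$; this is where the technical care is concentrated.
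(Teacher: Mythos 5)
Your proposal is correct and takes essentially the same route as the paper: it uses the assumed positive solution as the second argument in Lemma~\ref{pic}, tests the weak form of \eqref{s1} with $\phi^2/u$, invokes $G(u)/u\geq G'(0)\geq 0$, and concludes via the variational characterization of the first eigenvalue of the linearization at $0$. Your $\var$-shift and your explicit check that Lemma~\ref{l1} applies with the nonnegative bounded coefficient $a(x)G(u)/u$ are minor technical refinements of the same argument (the paper sidesteps the shift by taking $w\in C_c^{\infty}(\Om)$, on whose support $u$ is bounded below).
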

\begin{proof}
 Let $v$ be a positive weak solution of \eqref{s1}. Then
\begin{equation}\label{es1}
 \int_{\Om}\De v \De \psi dx = \int_{\Om} a(x) G(v) \psi dx,\,\,\forall\,\psi\in H^{2} (\Om) \cap H_{0}^{1}(\Om).
\end{equation}
For any $ w\in C_{c}^{\infty}(\Om),$  let us take $\frac{w^2}{v}$ as a test function in \eqref{es1} and obtain
\begin{equation}\label{es2}
 \int_{\Om}\De v \De \left(\frac{w^2}{v} \right)dx = \int_{\Om} a(x)\frac{G(v)}{v} w^2 dx.
\end{equation}
Since $v$ is a positive solution of \eqref{s1} so using the fact that $G(v)\geq 0$ and in view of Lemma\,\ref{l1} , one can see that
 $$-\De v>0.      $$
Now an application of Lemma\,\ref{pic} for $u=w$  yields that
\begin{align}\label{ww1}
&  \int_\Omega |\De w|^{2} dx \nonumber\\
&\geq \int_\Omega \De v \De \left(\frac{w^2}{v} \right)  dx \nonumber \\
&= \int_{\Om}   a(x) \frac{G(v)}{v} w^2 dx \nonumber    \\
&\geq  \int_{\Om } a(x)G'(0)  w^2 dx.
\end{align}
Consider the eigenvalue problem associated with the linearization for \eqref{s1} at $0,$ which is
 \begin{equation}\label{ww2}
 \De^{2} w- a(x) G'(0)w= \la w\,\,\,\mbox{in}\,\,\,\,\Om;\,\,\,w=0= \De w\,\,\mbox{on}\,\,\partial\Om.
 \end{equation}
By the variational characterization of the eigenvalue in \eqref{ww2}, from \eqref{ww1}, one can see that $\la\geq 0,$
which proves the claim.
\end{proof}

\section{Stability of positive solutions}

In this section, we consider the stability of a positive solution to \eqref{pab}. The functional associated with \eqref{pab} is
$$E\colon H_0^{2}(\Omega) \longrightarrow \mathbb{R}$$
defined by
\[E(u)=\frac{1}{2}\int_\Omega|\Delta u|^2 dx+\frac{\de}{2}\int_\Omega|\nabla u|^2 dx-\frac{1}{2}\int_\Omega a(x)u^2 dx+\int_\Omega F_1(x,\,u)dx,\]
where
\[F_1(x,s)=\int_0^s f_1(x,t)dt.\]

The weak formulation of \eqref{pab} is the following:
\begin{equation}\label{we}
\int_\Omega \Delta u\Delta\phi dx+\delta\int_\Omega \nabla u.\nabla\phi dx=\int_\Omega a(x)u\phi dx-\int_\Omega f_1(x,u)\phi dx,~~~~~~~\forall~\phi~C_c^2(\Omega).
\end{equation}
By the classical elliptic regularity theory, $u\in C^{3}(\overline{\Omega}),$  see Theorem\,2.20 \cite{gazz} when $\delta=0$ in \eqref{pab} and in
fact the same proof works in case $\delta\neq 0.$
Therefore, we assume that the solution of \eqref{pab} belongs to $  C^{3}(\overline{\Omega}).$
The linearized operator $L_u$ associated with \eqref{pab} at a given solution $u$ is defined by following duality:
\[ L_u: v\in H_0^2(\Omega) \longrightarrow L_{u}(v)\in(H_0^2(\Omega))',\]
where
\[L_{u}(v): \psi \in H_0^2(\Omega) \longrightarrow L_{u}(v,\psi)\]
and
\[L_u(v,\psi)= \int_\Omega\Delta v \Delta\psi dx+\delta \int_\Omega\nabla v.\psi dx-\int_\Omega a(x)v\psi dx+\int_\Omega \frac{\partial f_1(x,u)}{\partial u} v \psi dx.\]

It is easy to see that $L_u$ is well-defined and the first eigenvalue of $L_u$ is given by
\be\label{pev}
\la_1= \inf_{v\in H_{0}^{2}(\Om),\,v\neq 0} \frac{L_{u}(v,\,v)} {\int_{\Om}v^2 dx }.
\ee

We say that the solution $u$ of \eqref{pab} is stable if
\begin{equation}\label{stb}
\int_\Omega|\Delta v|^2 dx+\delta \int_\Omega|\nabla v|^2 dx-\int_\Omega a(x)v^2 dx+\int_\Omega \frac{\partial f_1(x,u)}{\partial u} v^2 dx \geq 0
\end{equation}
for every $v\in~C_c^2(\Omega),$ see \cite{wei2} for the definition of stabiity of solutions to biharmonic problems.
Actually, \eqref{stb} implies that the principal eigenvalue of the linearized equation
associated with \eqref{pab} is nonnegative and hence the solution $u$ of \eqref{pab}  is stable.

Next, we state and prove the stability theorem.

\begin{theorem}\label{th1}
 Let $u\in H_0^{2}(\Omega)\cap L^\infty (\Omega)$ be a positive  solution to \eqref{pab} in $\Om.$ Let \rm{(H1)} hold. Then $u$ is stable.
\end{theorem}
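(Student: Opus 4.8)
The plan is to test the weak formulation \eqref{we} against $\psi = v^2/u$ and then rewrite the stability quadratic form in \eqref{stb} entirely as a sum of manifestly nonnegative integrals, using the classical Picone identity \eqref{pico} for the Dirichlet (gradient) part and the purely algebraic identity in Lemma \ref{pic} for the biharmonic part. First I would check that for $v \in C_c^2(\Om)$ the function $\psi = v^2/u$ is admissible: since $v$ has compact support in $\Om$ and $u \in C^3(\overline{\Om})$ is positive in $\Om$, $u$ is bounded below by a positive constant on $\operatorname{supp}(v)$, so $v^2/u \in H_0^2(\Om)$. Inserting $\psi = v^2/u$ into \eqref{we} and using $a(x)u\cdot(v^2/u)=a(x)v^2$ and $f_1(x,u)(v^2/u)=(f_1(x,u)/u)v^2$, I obtain
\[
\int_\Om a(x)v^2\,dx = \int_\Om \De u\, \De\!\left(\frac{v^2}{u}\right) dx + \de \int_\Om \na u \cdot \na\!\left(\frac{v^2}{u}\right) dx + \int_\Om \frac{f_1(x,u)}{u}\, v^2\,dx.
\]

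Substituting this into the left-hand side of \eqref{stb}, the stability form $Q(v)$ decomposes into three brackets: a biharmonic bracket $\int_\Om |\De v|^2 - \int_\Om \De u\,\De(v^2/u)$, a gradient bracket $\de\big(\int_\Om |\na v|^2 - \int_\Om \na u\cdot\na(v^2/u)\big)$, and the zeroth-order term $\int_\Om\big(\tfrac{\pa f_1(x,u)}{\pa u}-\tfrac{f_1(x,u)}{u}\big)v^2\,dx$, which is nonnegative by \rm{(H1)}. For the gradient bracket I would apply the classical Picone identity \eqref{pico} with the two functions interchanged, giving the pointwise equality $|\na v|^2 - \na(v^2/u)\cdot\na u = \big(\na v - \tfrac{v}{u}\na u\big)^2$. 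For the biharmonic bracket I would use \emph{only} part \rm{(i)} of Lemma \ref{pic} — the algebraic identity $R(v,u)=L(v,u)$, which needs only $u>0$ and no sign condition on $\De u$ — yielding
\[
|\De v|^2 - \De\!\left(\frac{v^2}{u}\right)\De u = \left(\De v - \frac{v}{u}\De u\right)^2 - \frac{2\De u}{u}\left(\na v - \frac{v}{u}\na u\right)^2.
\]

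The crux, and the step I expect to be the main obstacle, is that this biharmonic identity produces the term $-\tfrac{2\De u}{u}\big(\na v - \tfrac{v}{u}\na u\big)^2$, whose sign is \emph{not} controlled here: the positivity assumption $-\De u>0$ used in Lemma \ref{pic}(ii) is unavailable, since the solution only satisfies the weaker constraint $\de u - 2\De u \ge 0$ from \eqref{pab}. The resolution is to refuse to estimate this term in isolation and instead merge it with the $\de$-multiple of the gradient bracket. This produces
\[
Q(v)= \int_\Om \left(\De v - \frac{v}{u}\De u\right)^2 dx + \int_\Om \frac{\de u - 2\De u}{u}\left(\na v - \frac{v}{u}\na u\right)^2 dx + \int_\Om\left(\frac{\pa f_1(x,u)}{\pa u} - \frac{f_1(x,u)}{u}\right)v^2\,dx.
\]

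Now each integrand is nonnegative for independent reasons: the first is a perfect square; the second is nonnegative precisely because of the second-order constraint $\de u - 2\De u \ge 0$ together with $u>0$; and the third by \rm{(H1)}. Hence $Q(v)\ge 0$ for every $v\in C_c^2(\Om)$, which is exactly the stability inequality \eqref{stb}, and the theorem follows. The delicate point to write up carefully is thus the bookkeeping that combines the $\de$-gradient term with the uncontrolled gradient term from the biharmonic Picone identity: it is the inequality $\de u - 2\De u \ge 0$, and not any positivity of $-\De u$, that makes the combined term nonnegative.
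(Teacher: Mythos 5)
Your proposal is correct and follows essentially the same route as the paper: testing the weak formulation with $\phi=v^2/u$, completing squares, and using $\delta u-2\Delta u\geq 0$ together with \rm{(H1)} to conclude the stability inequality \eqref{stb}. The only difference is organizational --- you route the algebra through part \rm{(i)} of Lemma~\ref{pic} and the classical identity \eqref{pico} instead of the paper's brute-force expansion of $\Delta(v^2/u)$, which is a cleaner way to arrive at the same decomposition $\int_\Om \bigl(\Delta v-\tfrac{v}{u}\Delta u\bigr)^2\,dx+\int_\Om \bigl(\delta-\tfrac{2\Delta u}{u}\bigr)\bigl(\nabla v-\tfrac{v}{u}\nabla u\bigr)^2\,dx$ plus the \rm{(H1)}-controlled zeroth-order term.
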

\begin{proof}
Since $u\in H_0^{2}(\Omega)\cap L^\infty (\Omega)$ be a positive  solution of \eqref{pab} so by the classical elliptic regularity theory, $u\in C^{2}(\overline{\Omega}).$
Now for any $v\in C_c^2(\Omega),$ we choose
\[\phi=\frac{v^2}{u}\]
as a test function in \eqref{we}. Since
\[\nabla \phi =\frac{2uv\nabla v-v^2\nabla u}{u^2},\] and
\[\Delta \phi=\frac{2u^3 |\nabla v|^2-4 v u^2 \nabla u.\nabla v+2v^2 u |\nabla u|^2+ 2vu^3 \Delta v- v^2u^2 \Delta u}{u^4}\]
so from \eqref{we}, we get
\begin{align*}\int_\Omega\Delta u.\left[\frac{2u^3 |\nabla v|^2-4 v u^2 \nabla u.\nabla v+2v^2 u |\nabla u|^2+ 2vu^3 \Delta v- v^2u^2 \Delta u}{u^4}\right] dx &\\
 +\delta\int_\Omega \nabla u.\left[\frac{2uv\nabla v-v^2\nabla u}{u^2}\right]dx = \int_\Omega a(x)v^2dx-\int_\Omega \frac{f_1(x,u)v^2}{u}dx.
\end{align*}
This yields that

\begin{align*}
& \int_\Omega \frac{-4v}{u^2}\Delta u\nabla u.\nabla vdx+ \int_\Omega \frac{2v}{u} \Delta u\Delta vdx+\int_\Omega \frac{2}{u}|\nabla v|^2 \Delta udx -
\int_\Omega\frac{v^2}{u^2}|\Delta u|^2dx    \\
& + \int_\Omega \frac{2 v^2}{u^3}|\nabla|^2\Delta udx +
\delta\int_\Omega \nabla u\left[ \frac{2v \nabla v}{u}-\frac{v^2}{u^2}\nabla u\right]dx-\underbrace{\int_\Omega a(x) v^2dx} \\
& + \underbrace{\int_\Omega f_1(x,u)\frac{v^2}{u}dx}+\underbrace{\int_\Omega |\Delta v|^2 dx}-\int_\Omega |\Delta v|^2dx +\underbrace{ \delta\int_\Omega|\nabla v|^2dx}-
\delta \int_\Omega|\nabla v|^2dx\\
& =0.
\end{align*}
Retaining underlined terms on left hand side we get
\begin{align*}
& \int_\Omega |\Delta v|^2dx+ \delta\int_\Omega|\nabla v|^2dx-\int_\Omega a(x)v^2dx+ \int_\Omega \frac{f_1(x,u)}{u}v^2dx \\
&=\int_\Omega  \Big[~\underbrace{|\Delta v|^2}+\underline{\delta |\nabla v|^2}+\frac{4v}{u^2}\Delta u \nabla u.\nabla v-\underbrace{\frac{2v}{v}\Delta u\Delta v} \\
&-\frac{2}{u}|\nabla v|^2 \Delta u +\underbrace{\frac{v^2}{u^2}|\Delta u|^2}-\frac{2v^2}{u^3}|\nabla u|^2\Delta u
- \delta\underline{\frac{2v}{u}\nabla u \nabla v} +  \delta \underline{\frac{v^2}{u^2}|\nabla u|^2} \Big]dx  \\
&=\int_\Omega \Big[\Big(\Delta v-\frac{v}{u}\Delta u\Big)^2 + \delta \Big(\nabla v-\frac{v}{u}\nabla u\Big)^2+ \frac{4v}{u^2}\Delta u \nabla u\nabla v
- \frac{2}{u}|\nabla v|^2\Delta u-\frac{2v^2}{u^3}|\nabla u|^2 \Delta u \Big]dx.
\end{align*}
This implies that
\begin{align*}
& \int_\Omega |\Delta v|^2dx+ \delta\int_\Omega|\nabla v|^2dx-\int_\Omega a(x)v^2dx+\int_\Omega \frac{f_1(x,u)}{u}v^2dx\\
&\geq
\int_\Omega \Big[\delta\Big(\nabla v-\frac{v}{u}\nabla u\Big)^2+\frac{4v}{u^2}\Delta u \nabla u\nabla v - \frac{2}{u}|\nabla v|^2\Delta u-
\frac{2v^2}{u^3}|\nabla u|^2 \Delta u \Big]dx  \\
&= \int_\Omega \Big[\delta\Big(\nabla v-\frac{v}{u}\nabla u\Big)^2 -\frac{2}{u}\Delta u \Big( |\nabla v|^2+\frac{v^2}{u^2}|\nabla u|^2-\frac{2v \nabla u\nabla v  }{u}\Big)\Big]dx  \\
&= \int_\Omega \Big(\delta-\frac{2}{u}\Delta u\Big)\Big(\nabla v-\frac{v}{u}\nabla u\Big)^2dx\\
&\geq 0.
\end{align*}
Since $u$ satisfies $$\delta u\geq 2 \Delta u \,\,\,\mbox{in} \,\,\Om$$ so this implies that
\begin{align*}
\int_\Omega |\Delta v|^2dx+ \delta\int_\Omega|\nabla v|^2dx-\int_\Omega a(x)v^2dx+\int_\Omega \frac{f_1(x,u)}{u}v^2dx \geq 0
\end{align*}
and using the hypothesis \rm{(H1)}, we obtain
\begin{equation}
\int_\Omega |\Delta v|^2dx+ \delta\int_\Omega|\nabla v|^2dx-\int_\Omega a(x)v^2dx+\int_\Omega \frac{\partial f_1(x,u)}{\partial u} v^2dx\geq 0
\end{equation}
and therefore $u$ is stable. This completes the proof of this theorem.
\end{proof}


\begin{thebibliography}{0}

\bibitem{al1} W. Allegretto, Positive solutions and spectral properties of weakly coupled elliptic systems, J. Math. Anal. Appl. \textbf{120} (1986), no. 2, 723--729.
\bibitem{al2} W. Allegretto, On the principal eigenvalues of indefinite elliptic problems, Math. Z. \textbf{195} (1987), no. 1, 29--35.
\bibitem{al3} W. Allegretto, Sturmian theorems for second order systems. Proc. Amer. Math. Soc. 94 (1985), no. 2, 291--296.
\bibitem{ale} W. Allegretto and Y.X.Huang, A Picone's identity for the p-Laplacian and applications,  Nonlinear Anal.,\,\textbf{32}(7) (1998), pp. 819--830.
\bibitem{bal} K.Bal, Generalized Picone's identity and its applications,  Electron. J. Diff. Equations.,\,no. \textbf{243} (2013), pp. 1--6.


\bibitem{ber}  E. Berchio, A. Farina, A. Ferrero, F. Gazzola, Existence and stability of entire solutions to a semilinear fourth order elliptic problem,
J. Diff. Equations \textbf{252} (2012), 2596--2616.

\bibitem{bog} G. Bogn\'{a}r,  O. Do\v{s}l\'{y},  Picone-type identity for pseudo p-Laplacian with variable power, Electron. J. Diff. Equations 2012, No. \textbf{174}, 1--8.


\bibitem{cas} D. Castorina, P. Esposito and B. Sciunzi, Low dimensional instability for semilinear
and quasilinear problems in $\R^N,$  Commun. Pure Appl. Anal. \textbf{8} (2009), No. 6, 1779--1793

\bibitem{dun} D.\,R.Dunninger, A Picone integral identity for a class of fourth order elliptic differential inequalities,
Atti Accad. Naz. Lincei Rend. Cl. Sci. Fis. Mat. Natur.\,\textbf{50}(8) (1971), 630--641.



\bibitem{far} A. Ferrero, F. Gazzola and T. Weth, Positivity, symmetry and uniqueness for minimizers of second-order Sobolev inequalities,
 Ann. Mat. Pura Appl., \textbf{186} (2007), no. 4, 565--578.

\bibitem{gon} J.V.A. Goncalves, E. D. Silva, M. L. Silva, On positive solutions for a fourth order asymptotically linear elliptic equation under Navier boundary conditions,
  J. Math. Anal. Appl. \textbf{384}(2011)  387–399.


\bibitem{gazz}  F. Gazzola, H. Grunau and G. Sweers, Polyharmonic boundary value problems, A monograph on positivity preserving and
nonlinear higher order elliptic equations in bounded domain, Springer, 1991.

\bibitem{jar} J. Jaro\v{s},  The higher-order Picone identity and comparison of half-linear differential equations of even order,
Nonlinear Anal.,\,\textbf{74}(18) (2011), pp. 7513--7518.

\bibitem{kan} D.A. Kandilakis, M. Magiropoulos and  N.Zographopoulos,
Existence and bifurcation results for fourth-order elliptic equations involving two critical Sobolev exponents,
Glasg. Math. J. 51, no. 1 (2009), 127--141.


\bibitem{karat} J.\,Kar\'{a}tson, P.\,L.\,Simon,  On the linearized stability of positive solutions of quasilinear problems
with p-convex or p-concave nonlinearity, Nonlinear Anal.,\,\textbf{47}  (2001), pp. 4513--4520.


\bibitem{lin} A.E.Lindsay and L.Viga, Multiple quenching solutions of a fourth order parabolic pde with a singular nonlinearity modeling a MEMS capacitor,
SIAM J. Appl. Math., Vol. \textbf{72}, No. 3 (2012), 935--958.

\bibitem{you} Y.Maliki, On a $Q$-curvature equation on complete Riemannian manifolds,  Adv. Nonlinear Stud. \textbf{10} (2010), no. 1 (2010), 195--217.


\bibitem{manes} A. Manes, A.M. Micheletti, Un'estensione della teoria variazionale classica degli autovalori per operatori
ellittici del secondo ordine. Bollettino U.M.I., \textbf{7}, 1973, 285--301.


\bibitem{pel} J. A. Pelesko and D. H. Bernstein, Modeling MEMS and NEMS, Chapman $\&$ Hall CRC
Press, Boca Raton, FL, 2002.

\bibitem{sat} T.Sato, T.Watanabe, Singular positive solutions for a fourth order elliptic problem in $\R^N,$ Commun. Pure Appl. Anal. \textbf{10}, No. 1 (2011), 245--268.

\bibitem{str}  M. Struwe, Variational Methods: Applications to Nonlinear Partial Differential Equations and Hamiltonian
Systems, Fourth Edition, Springer, 2007.

\bibitem{tyagi}J.Tyagi, A nonlinear Picone's identity and applications, Applied Mathematics Letters, \textbf{26} (2013), 624--626.

\bibitem{tyagi1} J. Tyagi, Stability of positive solutions to $p \&2$-Laplace type equations, Diff. Equ.Appl. \textbf{5} (2013), 549--559.


\bibitem{tyagi2} J.Tyagi, A note on the stability of solutions to quasilinear elliptic equations, Advances in Cal. Var.,\textbf{ 6}, Issue 4 (2013),  483--492.

 \bibitem{war} G. Warnault, Liouville theorems for stable radial solutions for the biharmonic operator, Asymptot. Anal. \textbf{69} (2010) 87--98.



\bibitem{wei} J. Wei, X.\,Xu, Classification of solutions of higher order conformally invariant equations,
Math. Ann. \textbf{313} (1999), 207--228.

\bibitem{wei2} J. Wei, D.\,Ye, Liouville theorems for stable solutions of biharmonic problem,  Math. Ann. \textbf{356} (2013), No. 4, 1599--1612.

\bibitem{yosi}   N. Yoshida, Picone identities for half-linear elliptic operators with p(x)-Laplacians and applications to Sturmian comparison theory,
Nonlinear Anal. \textbf{74} (2011),no. 16, 5631--5642.




\end{thebibliography}
\end{document}